\title{The regular singular inverse problem in differential Galois theory}
\author{Thomas Serafini and Michael Wibmer\thanks{Supported by the Lise Meitner grant M-2582-N32 of the Austrian Science Fund FWF.}}
\date{\today}
\begin{document}

\maketitle

\begin{abstract}
	We show that every linear algebraic group over an algebraically closed field of characteristic zero is the differential Galois group of a regular singular linear differential equation with rational function coefficients. %use a recent result on specialization of differential Galois groups and differential torsors from \cite{fengwib}, along with the Riemann-Hilbert correspondence on $\C$ to realize any algebraic group over any algebraically closed field of characteristic zero as the Galois group of a regular singular differential equation.
	\let\thefootnote\relax\footnotetext{{\em Mathematics Subject Classification Codes:} 34M50, 12H05.
		{\em Key words and phrases}:
		Differential Galois theory, regular singular differential equations, Riemann-Hilbert correspondence, free proalgebraic groups.
		%		Michael Wibmer, Institute of Analysis and Number Theory, Graz University of Technology, Kopernikusgasse 24, 8010 Graz, Austria, \texttt{wibmer@math.tugraz.at}
	}
\end{abstract}

\section{Introduction}

Let $k$ be an algebraically closed field of characteristic zero. We consider linear differential equations over $k(z)$, the field of rational functions with coefficients in $k$, equipped with the usual derivation $\frac{d}{dz}$. The \emph{solution of the inverse problem} in differential Galois theory states that every linear algebraic group over $k$ is the differential Galois group of a linear differential equation with coefficients in $k(z)$. This was first proved in \cite{inverse}, based on previous work of several authors.

There are various directions into which the inverse problem can be generalized. For example, instead of trying to realize linear algebraic groups as differential Galois groups, one can try to realize surjective morphisms of linear algebraic groups. This idea can be formalized through differential embedding problems: Recall that a \emph{differential embedding problem} for $k(z)$ is given by a Picard-Vessiot extension $L$ of $k(z)$ with differential Galois group $H$ together with a surjective morphism $G\to H$ of linear algebraic groups. A \emph{solution} consists of a Picard-Vessiot extension $M$ of $k(z)$, containing $L$, with differential Galois group $G$ such that the restriction map $G\to H$ of differential Galois groups corresponds to the given morphism $G\to H$. Note that the special case $H=1$ recovers the inverse problem. Building on previous work (\cite{BachmayrHarbaterHartmannWibmer:DifferentialEmbeddingProblems}, \cite{BachmayrHarbaterHartmannPop:LargeFieldsInDifferentialGaloisTheory}, \cite{BachmayrHarbaterHartmannWibmer:FreeDifferentialGaloisGroups}, \cite{BachmayrHarbaterHartmannWibmer:TheDifferentialGaloisGroupOfRationalFunctionField}), it was recently shown in \cite{fengwib} that every differential embedding problem for $k(z)$ has a solution.

%As the linear algebraic groups that can be realized as differential Galois groups are exactly the algebraic quotients of the absolute differential Galois group, determining the absolute differential Galois group of $k(z)$, can also be seen as a generalization of the inverse problem. Matzat's conjecture states that this group is the free proalgebraic group on $|k|$ generators. This 
%
%
%
%
%\begin{enumerate}
%	\item In how many essentially different ways does a given linear algebraic group $G$ occur as a differential Galois group? That is, what is the cardinality of the set of isomorphism classes of Picard-Vessiot extensions with differential Galois group isomorphic to $G$?
%	\item What is the absolute differential Galois group of $k(x)$? (The differential Galois groups are then the algebraic quotients of the absolute differential Galois group.) 
%\end{enumerate}
%
%These questions have recently been answered (\cite{BachmayrHarbaterHartmannWibmer:FreeDifferentialGaloisGroups},\cite{BachmayrHarbaterHartmannWibmer:TheDifferentialGaloisGroupOfRationalFunctionField},\cite{Wibmer:SubgroupsOfFreeProalgebraicGroupsAndMatzatsconjecture},\cite{FengWibmer:DifferentialGaloisGroupsSpecializationsAndMatzatsConjecture}): For every non-trivial algebraic group $G$, the cardinality in (i) is equal to $|k|$, the cardinality of $k$. The absolute differential Galois group of $k(x)$ is the free proalgebraic group on a set of cardinality $|k|$.

Another way to sharpen the inverse problem is to restrict the class of differential equations via which one hopes to realize a given linear algebraic group as a differential Galois group. As the class of \emph{regular singular differential equations} is one of the best studied classes of linear differential equations, it is natural to wonder ``Is every linear algebraic group the Galois group of a regular singular linear differential equation?'' Our main result is that the answer is yes. In fact, we offer a somewhat more precise statement.

\begin{theononumber}[Theorem \ref{thm:inv}]
%	Let $G\leq\GL_n(k)$ be a linear algebraic group over $k$ and let $d\geq 0$ be such that $G$ is the Zariski closure of a subgroup generated by $d$ elements. Then, for every subset $X$ of $k$ consisting of $d$ elements, there exists a regular singular differential equation over $k(x)$ of order $n$ with singularities contained in $X\cup\{\infty\}$  and differential Galois group $G$.
	Let $G\leq\GL_{n,k}$ be a linear algebraic group over $k$ that can be generated by $d$ elements (as an algebraic group) and let $S$ be a subset of $\P^1(k)$ with $d+1$ elements. Then there exists a matrix $A\in k(z)^{n\times n}$ such that the differential equation $\del(y)=Ay$ is regular singular with singularities contained in $S$ and has differential Galois group $G$. %In particular, every linear algebraic group can be realized as the differential Galois group of a regular singular differential equation.
\end{theononumber}

The above theorem is reminiscent of Schlesinger's density theorem: For the field $k=\C$ of complex numbers, the differential Galois group of a regular singular differential equation with $d+1$ singularities is the Zariski closure of the monodromy group, which is generated by $d$ elements.
Moreover, the weak solution of the Riemann-Hilbert problem states that every subgroup of $\GL_n(\C)$ that can be generated by $d$ elements, occurs as the monodromy group of a regular singular differential equation over $\C(z)$ of order $n$ with the $d+1$ (potential) singularities being fixed a priori. Therefore, the above theorem holds for $k=\C$. This is well known and, in fact, this is exactly how the inverse problem was first solved over $\C(z)$ in \cite{TretkoffTretkoff:SolutionOfTheInverseProblem}.

Our strategy to prove the above theorem is to deduce it from the case $k=\C$ via a specialization argument, based on a specialization theorem from \cite{fengwib}.  

For $k=\mathbb{C}$ it is an immediate consequence of the Riemann-Hilbert correspondence that the differential Galois group of the family of all regular singular differential equations with singularities contained in $S$, is the free proalgebraic group on $d$ generators (where $|S|=d+1$). It seems natural to expect that this statement is true in full generality, i.e., for any algebraically closed field $k$ of characteristic zero. This question was already raised in \cite{Wibmer:RegSing} and the above theorem can be seen as a small step towards its resolution. Our theorem implies that every linear algebraic group over $k$ that is a quotient of the free proalgebraic group on $d$ generators also is a quotient of the differential Galois group of the family of all regular singular differential equations with singularities in $S$ (Corollary \ref{cor:proalg}).

\section{The regular singular inverse problem}

Throughout this article $k$ is an algebraically closed field of characteristic zero and all fields are assumed to have characteristic zero. All rings are assumed to be commutative. For the sake of clarity, we use $t$ for the local variable and $z$ for the global variable. In other words, we write $k((t))$ for the field of formal Laurent series over $k$ and $k(z)$ for the field of rational functions over $k$. For brevity, we use the term ``algebraic group (over $k$)'' for ``affine group scheme of finite type (over $k$)''. In other words, all algebraic groups are assumed to be linear.
A ``proalgebraic group (over $k$)'' is an affine group scheme (over $k$). A ``closed subgroup'' of a proalgebraic group is a closed subgroup scheme.
For a morphism $R\to R'$ of rings and a scheme $X$ over $R$ we denote with $X_{R'}$ the scheme over $R'$ obtained from $X$ by base change via $R\to R'$. For an affine scheme $X$ over a ring $R$, we denote with $R[X]$ the $R$-algebra representing $X$. In particular, for an algebraic group $G$ over $k$ its coordinate ring is denoted with $k[G]$.

\medskip

In this section we prove the results announced in the introduction. After recalling the basics of differential Galois theory and the theory of regular singular differential equations, we study regular singular differential equations under an extension of the constants and under specialization. These preparatory results are then combined to yield the proof of the main theorem.
	
\subsection{Differential Galois theory}
	
In this section we recall a few key definitions and facts from differential Galois theory. For more background see any of the textbooks \cite{SVDP}, \cite{Magdid:Lectures}, \cite{CrespoHajto} or \cite{Saul}.

\medskip

%
% More definitions, general results and facts can be found in (section 1 of) \cite{SVDP}. As a general convention, we refer to affine group schemes of finite type (over a field $k$) as algebraic groups (over $k$) - in particular, our algebraic groups are all linear. All fields will have characteristic zero.

Let $(K,\del)$ be a differential field with field of constants $K^\del=\{a\in K|\ \del(a)=0\}$ equal to $k$. A \emph{differential module} $(M,\nabla)$ over $(K,\del)$ is a finite dimensional $K$-vector space together with an additive map $\nabla\colon M\to M$ satisfying $\nabla(am)=\del(a)m+a\nabla(m)$ for $a\in K$ and $m\in M$.	
To a differential equation $\del(y)=Ay$ with $A\in K^{n\times n}$ we can associate the differential module $(K^n,\nabla_A)$, where $\nabla_A=\partial-A\colon K^n\to K^n$ is given by $\nabla_A(\xi)=\del(\xi)-A\xi$.

Conversely, if $(M,\nabla)$ is a differential module and $\underline{e}=(e_1,\ldots,e_n)$ is a $K$-basis of $M$, then
we can write $\nabla(\underline{e})=\underline{e}(-A)$ for some matrix $A\in K^{n\times n}$ so that 
$\nabla(\underline{e}\xi)=\underline{e}\del(\xi)+\nabla(\underline{e})\xi=\underline{e}(\del(\xi)-A\xi)$ for all $\xi\in K^n$. Thus, up to the choice of a basis, $(M,\nabla)$ is of the form $(K^n,\nabla_A)$. The choice of another basis $\underline{e}'$, say $\underline{e}=\underline{e}'F$ with $F\in\GL_n(K)$, leads to another matrix $A'\in K^{n\times n}$ satisfying
\begin{equation}\label{eq:gauge equivalence}
	A'=FAF^{-1}+\del(F)F^{-1} 		
\end{equation}	
Two matrices $A,A'\in K^{n\times n}$ are said to be \emph{Gauge equivalent} (over $K$) if there	exists a matrix $F\in\GL_n(K)$ such that (\ref{eq:gauge equivalence}) is satisfied.

Base change for differential modules works as expected: If $(K',\del')$ is a differential field extension of $(K,\del)$ and $(M,\nabla)$ is a differential module over $(K,\del)$, then $M'=M\otimes_{K}K'$ becomes as differential module over  $(K',\del')$ via $\nabla'(m\otimes a)=\nabla(m)\otimes a+m\otimes\del'(a)$ for $m\in M$ and $a\in K'$. If $(M,\nabla)=(K^n,\nabla_A)$ for some $A\in K^{n\times n}$, then $(M',\nabla')=(K'^n,\nabla_A)$. I.e., in terms of differential equations, this construction simply means that a differential equation $\del(y)=Ay$ over $K$ is considered as a differential equation over $K'$.

\begin{definition} \label{defi:PVring}
For $A \in K^{n \times n}$, a differential $K$-algebra $R$ is a \emph{Picard-Vessiot ring} for $\del(y) = Ay$ if
\begin{enumerate}[(i)]
	\item $R$ is $\del$-simple (i.e., it has no $\del$-stable ideals other than $\{0\}$ and $R$),
	\item there exists a fundamental solution matrix for $\del(y)=Ay$ in $R$, i.e., there exists $Y \in \GL_n(R)$ such that $\del(Y) = AY$,
		\item as a $K$-algebra $R$ is generated by the entries of $Y$ and the inverse of its determinant, i.e., $R = K\big[ Y_{i,j}, \frac{1}{\det(Y)}\big]$.                                                                                                                                         
\end{enumerate}
\end{definition}

Given $A\in K^{n\times n}$, there exists a unique (up to $K$-$\del$-isomorphisms) Picard-Vessiot ring $R/K$ for $\del(y)=Ay$. The \emph{differential Galois group} of $\del(y)=Ay$ or of $R/K$ is then defined as the group of differential automorphisms of $R/K$. More formally, we define $\Gal(R/K)$ as the functor, from the category of $k$-algebras to the category of groups, that associates to a $k$-algebra $T$ the group of differential automorphisms of $R\otimes_k T$ over $K\otimes_k T$, where $T$ is considered as a constant differential ring. The functor $\Gal(R/K)$ is representable by a finitely generated $k$-algebra, i.e., $\Gal(R/K)$ is an algebraic group (over $k$).

Definition \ref{defi:PVring} and the definition of the differential Galois group can be generalized to (potentially infinite) families $(\del(y)=A_iy)_{i\in I}$ of linear differential equations over $K$. For example, for $K=k(z)$ and $S$ a subset of $\P^1(k)$, one can consider the differential Galois of the family of all regular singular differential equations over $k(z)$ with singularities contained in $S$. In condition (ii) one requires that for every member of the family there exists a fundamental solution matrix in $R$ and in condition (iii) one allows all entries and the inverse of the determinant of all fundamental solution matrices. 

For families $\Gal(R/K)$ need not be an algebraic group, however, it is still a proalgebraic group (over $k$). Also note that the Picard-Vessiot ring for a family $(\del(y)=A_iy)_{i\in I}$ is generated by the Picard-Vessiot rings of its members $\del(y)=A_iy$.

	\subsection{Regular singular differential equations}
	
	In this section we recall the basic definitions regarding regular singular differential equations and observe that the relevant properties are preserved under an extension of the base field $k$. We also establish a key result (Lemma \ref{lem:basis}) for the proof of our main theorem (Theorem \ref{thm:inv}): We show that if a matrix $A\in k(t)^{n\times n}$ is Gauge equivalent over $k((t))$ to a matrix $A'$ such that $tA'\in k[[t]]^{n\times n}$, then $A$ is already Gauge equivalent over $k(t)$ to such a matrix. More background on regular singular differential equations can be found in \cite[Sections 5 and 6]{SVDP}, \cite{Saul} or \cite[Part~I]{MitschiSauzin}.
	
\medskip	
	
	We first recall the basic definitions regarding formal differential modules, i.e., differential modules over the differential field $(k((t)),\frac{d}{dt})$ of formal Laurent series with coefficients in $k$.

	 A \emph{$k[[t]]$-lattice} $\Lambda$ in a finite dimensional $k((t))$\=/vector space $M$ is a $k[[t]]$-submodule of the form $\Lambda=k[[t]]e_1+\ldots+k[[t]]e_n$, where $e_1,\ldots,e_n$ is a $k((t))$-basis of $M$.  
	A differential module $(M,\nabla)$ over $(k((t)),\frac{d}{dt})$ is 	
	\begin{itemize}
		 \item \emph{regular} if there exists a $k[[t]]$-lattice $\Lambda$ in $M$ such that $\nabla(\Lambda)\subseteq \Lambda$;
		 \item \emph{regular singular} if there exists a $k[[t]]$-lattice $\Lambda$ in $M$ such that $\nabla(\Lambda)\subseteq \frac{1}{t}\Lambda$.
	\end{itemize}
	
For a matrix $A\in k((t))^{n\times n}$, the differential equation $\del(y)=Ay$ is \emph{regular} (or \emph{regular singular}) if the corresponding differential module $(k((t))^n,\nabla_A)$ is regular (or regular singular). Thus $\del(y)=Ay$ is regular if and only if $A$ is Gauge equivalent over $k((t))$ to a matrix $A'\in k[[t]]^{n\times n}$ and $\del(y)=Ay$ is regular singular if and only if $A$ is Gauge equivalent over $k((t))$ to a matrix $A'\in k((t))^{n\times n}$ such that $tA'\in k[[t]]^{n\times n}$.

\medskip

We next discuss our main topic of interest, linear differential equations over $k(z)$, the field of rational functions with coefficients in $k$. Throughout this article we consider $k(z)$ as a differential field with respect to the standard derivation $\frac{d}{dz}$. 

A differential module $(M,\nabla)$ over $k(z)$ defines for every $p\in\P^1(k)=k\cup\{\infty\}$ a formal differential module $(M_p,\nabla_p)$. In detail: For $p\in k$ we set $\widehat{k(z)}^p=k((z-p))$. Then, writing $t=z-p$ so that $\widehat{k(z)}^p=k((t))$, we see that $(k(z),\frac{d}{dz})$ is a differential subfield of $(k((t)),\frac{d}{dt})$. We define the differential module $(M_p,\nabla_p)$ over $\widehat{k(z)}^p=k((t))$ as the base change of $(M,\nabla)$ via the inclusion $k(z)\subseteq \widehat{k(z)}^p$. 

For $p=\infty$, we set $\widehat{k(z)}^p=k((z^{-1}))$. Then, writing $t=z^{-1}$ so that $\widehat{k(z)}^p=k((t))$, we see that $(k(z),\frac{d}{dz})$ is a differential subfield of $(k((t)),-t^2\frac{d}{dt})$. Thus, via base change from  $(k(z),\frac{d}{dz})$ to $(k((t)),-t^2\frac{d}{dt})$ we obtain from $(M, \nabla)$ a differential module $(M_p,\nabla')$ over  $(k((t)),-t^2\frac{d}{dt})$. Set $\nabla_p=-\frac{1}{t^2}\nabla'\colon M_p\to M_p$. Then $(M_p,\nabla_p)$ is a differential module over $(k((t)),\frac{d}{dt})$. 

A point $p\in\P^1(k)$ is \emph{regular} for $(M,\nabla)$ if $(M_p,\nabla_p)$ is regular. Otherwise it is \emph{singular}. A point $p\in\P^1(k)$ is \emph{regular singular} for $(M,\nabla)$ if $(M_p,\nabla_p)$ is regular singular. Finally, $(M,\nabla)$ is \emph{regular singular} if all points $p\in\P^1(k)$ are regular singular for $(M,\nabla)$. As regular points are regular singular, this is equivalent to saying that all singular points are regular singular. 

For $A\in k(z)^{n\times n}$, a point $p\in \P^1(k)$ is \emph{regular} (or \emph{regular singular}) for $\del(y)=Ay$ if $p$ is a regular (or regular singular) point of the corresponding differential module $(k(z)^n,\nabla_A)$. 
So, explicitly, a point $p\in k$ is regular (or regular singular) for $\del(y)=Ay$ if and only if $\del(y)=A(t+p)y$ is regular (or regular singular). On the other hand, $p=\infty$ is a regular (or regular singular) point for $\del(y)=Ay$ if and only if $\del(y)=-\frac{1}{t^2}A(\frac{1}{t})y$ is regular (or regular singular).

The \emph{singularities} or \emph{singular points} of $\del(y)=Ay$ are the points of $\P^1(k)$ that are not regular for $\del(y)=Ay$. If $p\in k$ is not a pole of one of the entries of $A$, then $A\in k[[t]]^{n\times n}$ for $t=z-p$ and so $p$ is a regular point for $\del(y)=Ay$. Thus the singularities of $\del(y)=Ay$ are among the poles of the entries of $A$ and $\infty$. On the other hand, a pole of an entry of $A$ need not be a singularity of $\del(y)=Ay$. For example, $p=0$ is a regular point for $\del(y)=\frac{1}{z}y$ (with solution $y(z)=z$). Indeed, for $F=-z$, we have $F\frac{1}{z}F^{-1}+\del(F)F^{-1}=0$.

The differential equation $\del(y)=Ay$ is \emph{regular singular} if all points $p\in \P^1(k)$ are regular singular for $\del(y)=Ay$, i.e., if the differential module $(k(z)^n,\nabla_A)$ is regular singular. 

For $k=\mathbb{C}$, the condition of being regular singular is usually defined via a growth condition on the solutions (see e.g., \cite[Def. 9.11]{Saul} or \cite[Part I, Def. 1.24]{MitschiSauzin}). However, this growth condition is equivalent to the existence of a local meromorphic gauge transformation yielding a transformed matrix that has at most a pole of order one (\cite[Theorem 9.17]{Saul}). The existence of such a local meromorphic gauge transformation is equivalent to the existence of a local formal gauge transformation with the same effect (\cite[Prop. 3.12 and Lemma 3.42]{SVDP}). Thus the above definition of a regular singular differential equation $\del(y)=Ay$ with $A\in k(z)^{n\times n}$ is equivalent to the classical definition for $k=\mathbb{C}$.

The following three lemmas show that being regular singular is preserved under an extension of the base field $k$.

\begin{lemma} \label{lemma: base change for formal}
Let $k\subseteq k'$ be an inclusion of algebraically closed fields and let $(M,\nabla)$ be a differential module over $(k((t)), \frac{d}{dt})$. If $M$ is regular (or regular singular) then $M'=M\otimes_{k((t))}k'((t))$ is regular (or regular singular).
\end{lemma}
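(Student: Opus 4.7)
The plan is to transport a ``good lattice'' witnessing regularity (or regular singularity) from $M$ up to $M'$ via base change. Concretely, suppose $\Lambda = k[[t]]e_1+\ldots+k[[t]]e_n$ is a $k[[t]]$-lattice in $M$ satisfying $\nabla(\Lambda)\subseteq\Lambda$ (resp.\ $\nabla(\Lambda)\subseteq\tfrac{1}{t}\Lambda$). I would then consider
\[
\Lambda' \;=\; k'[[t]](e_1\otimes 1)+\ldots+k'[[t]](e_n\otimes 1) \;\subseteq\; M'
\]
and argue that $\Lambda'$ is a $k'[[t]]$-lattice in $M'$ with $\nabla'(\Lambda')\subseteq\Lambda'$ (resp.\ $\nabla'(\Lambda')\subseteq\tfrac{1}{t}\Lambda'$).

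The first step is to check that $e_1\otimes 1,\ldots,e_n\otimes 1$ form a $k'((t))$-basis of $M'$; this is immediate from the fact that extension of scalars along $k((t))\hookrightarrow k'((t))$ preserves bases of free modules, so $\Lambda'$ qualifies as a $k'[[t]]$-lattice in $M'$. The second step is to express $\nabla$ on $\Lambda$ in terms of the basis: write $\nabla(e_j)=\sum_i c_{ij}e_i$ with $c_{ij}\in k[[t]]$ in the regular case and $c_{ij}\in\tfrac{1}{t}k[[t]]$ in the regular singular case. Using the definition $\nabla'(m\otimes a)=\nabla(m)\otimes a+m\otimes\tfrac{d}{dt}(a)$ from the excerpt, one computes $\nabla'(e_j\otimes 1)=\sum_i c_{ij}(e_i\otimes 1)$, so the matrix of $\nabla'$ in the new basis has the same entries $c_{ij}$, now viewed inside $k'[[t]]$ or $\tfrac{1}{t}k'[[t]]$ respectively. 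This immediately gives the desired containment $\nabla'(\Lambda')\subseteq\Lambda'$ (resp.\ $\nabla'(\Lambda')\subseteq\tfrac{1}{t}\Lambda'$).

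There is essentially no obstacle here: the result is structural, hinging only on the compatibility of the lattice $\Lambda$ with scalar extension and on the fact that the connection matrix does not change under base change (since the derivation is extended by the identity on constants in $k'$ and by $\tfrac{d}{dt}$ on $k'[[t]]$). The only small point to be careful about is not to conflate the derivations on $k((t))$ and $k'((t))$, but since both are $\tfrac{d}{dt}$ and $k\subseteq k'$ consists of $\tfrac{d}{dt}$-constants, the extension $\nabla'$ really is determined on $e_j\otimes 1$ by $\nabla(e_j)\otimes 1$, which closes the argument.
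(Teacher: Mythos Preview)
Your proposal is correct and follows essentially the same approach as the paper: the paper simply sets $\Lambda'=k'[[t]]\Lambda$ and asserts in one line that $\nabla'(\Lambda')\subseteq\Lambda'$ (resp.\ $\tfrac{1}{t}\Lambda'$), while you unpack this by choosing a basis of $\Lambda$ and tracking the connection matrix through the base change. The only step you leave implicit is extending from $\nabla'(e_j\otimes 1)$ to all of $\Lambda'$ via the Leibniz rule (noting $\tfrac{d}{dt}(k'[[t]])\subseteq k'[[t]]\subseteq\tfrac{1}{t}k'[[t]]$), but that is routine.
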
	
\begin{proof}
	If $\Lambda$ is a lattice in $M$ with $\nabla(\Lambda)\subseteq \Lambda$ (or $\nabla(\Lambda)\subseteq \frac{1}{t}\Lambda$), then $\Lambda'=k'[[t]]\Lambda$ is a lattice in $M'$ with $\nabla'(\Lambda')\subseteq \Lambda'$ (or $\nabla'(\Lambda')\subseteq \frac{1}{t}\Lambda'$). Thus if $M$ is regular (or regular singular), so is $M'$.
\end{proof}		

\begin{lemma} \label{lemma: base change for rational}
	Let $k\subseteq k'$ be an inclusion of algebraically closed fields and let $(M,\nabla)$ be a differential module over $k(z)$. 
	If $M$ is regular singular with singular points in $S\subseteq \P^1(k)$, then $M'=M\otimes_{k(z)}k'(z)$ is regular singular with singular points in $S$.
\end{lemma}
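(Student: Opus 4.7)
The plan is to reduce the statement to checking the regular and regular singular conditions point by point on $\mathbb{P}^1(k')$, using the previous lemma on formal differential modules for points in $\mathbb{P}^1(k)$ and a direct argument for the ``new'' points in $k'\setminus k$.

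First I would choose a $k(z)$-basis of $M$ so that $(M,\nabla)\cong (k(z)^n,\nabla_A)$ for some matrix $A\in k(z)^{n\times n}$. Then $(M',\nabla')\cong (k'(z)^n,\nabla_A)$, where the same matrix is viewed over $k'(z)$. We need to show that for every $p\in\P^1(k')$ the local module $(M'_p,\nabla'_p)$ is regular singular, and that it is in fact regular whenever $p\notin S$.

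Next I would split into two cases. If $p\in\P^1(k)$, then with $t=z-p$ (or $t=z^{-1}$ when $p=\infty$) a direct transitivity-of-base-change computation gives
\[
M'_p \;=\; M\otimes_{k(z)}k'((t)) \;=\; M_p\otimes_{k((t))}k'((t)).
\]
By hypothesis $M_p$ is regular singular for every $p\in\P^1(k)$ and regular for $p\notin S$, so Lemma \ref{lemma: base change for formal} applied to $(M_p,\nabla_p)$ immediately transfers these properties to $(M'_p,\nabla'_p)$. If instead $p\in k'\setminus k$, then since every entry of $A$ is a rational function whose denominator lies in $k[z]$, and since $k$ is algebraically closed, none of these denominators vanish at $p$. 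Hence $A\in k'[[z-p]]^{n\times n}$ and the standard lattice $k'[[z-p]]^n\subseteq k'((z-p))^n$ is stabilised by $\nabla_A$, so $(M'_p,\nabla'_p)$ is regular. Such a $p$ automatically lies outside $S\subseteq\P^1(k)$, consistent with what we want.

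Putting the two cases together yields that $M'$ is regular singular with singularities contained in $S$. The only mildly subtle point is the case $p\in k'\setminus k$, which is not covered by Lemma \ref{lemma: base change for formal}; this is handled simply by observing that poles of entries of $A\in k(z)^{n\times n}$ remain in $\P^1(k)$ after extending scalars to $k'$, so all genuinely new points are automatically regular. I do not expect a serious obstacle here beyond the bookkeeping of the two base-change steps $k(z)\subseteq k'(z)\subseteq k'((t))$.
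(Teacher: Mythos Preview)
Your proposal is correct and follows essentially the same route as the paper: reduce to $(k(z)^n,\nabla_A)$, observe that poles of $A$ lie in $\P^1(k)$ so that points in $k'\setminus k$ are automatically regular, and for $p\in\P^1(k)$ invoke Lemma~\ref{lemma: base change for formal} on the formal localisation. The only cosmetic difference is that the paper phrases the argument contrapositively (start from a singular point of $M'$ and show it must lie in $\P^1(k)$, hence in $S$), whereas you run through all $p\in\P^1(k')$ directly; the content is the same.
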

\begin{proof}
	We may assume that $(M,\nabla)=(k(z)^n,\nabla_A)$ for some $A\in k(z)^{n\times n}$.
Let $p\in\P^1(k')$ be a singular point for $M'$. We have to show that $p$ is regular singular for $M'$ and that $p\in S$. All points in $k'$ that are not poles of an entry of $A$ are regular for $M'$. Thus $p$ must be a pole of an entry of $A$ or $p=\infty$. In particular, $p\in\P^1(k)$. By assumption, $p$ is a regular singular point for $M$. So $p$ is a regular singular point for $M'$ by Lemma \ref{lemma: base change for formal}. Also, by the same lemma, $p$ cannot be a regular point for $M$ because then it would be a regular point for $M'$. Thus $p$ is a singular point for $M$ and therefore lies in $S$. 
\end{proof}

\begin{lemma}\label{lem:basechange}
	Let $k \subseteq k'$ be an inclusion of algebraically closed fields. If $A\in k(z)^{n\times n}$ is such that $\del(y)=Ay$ is regular singular with singularities in $S\subseteq \P^1(k)$ and differential Galois group $G$, then $\del(y)=Ay$, considered as a differential equation over $k'(z)$, is regular singular with singularities in $S\subseteq \P^1(k')$ and differential Galois group $G_{k'}$.
\end{lemma}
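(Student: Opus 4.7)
The first conclusion, that $\partial(y)=Ay$ is regular singular over $k'(z)$ with singularities contained in $S\subseteq \P^1(k')$, is an immediate application of Lemma \ref{lemma: base change for rational} to the differential module $(k(z)^n,\nabla_A)$. So all the substance of the statement is in the claim about the differential Galois group.

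My plan for the Galois group is the standard base-change template. Let $R$ be the Picard-Vessiot ring of $\partial(y)=Ay$ over $k(z)$ with fundamental solution matrix $Y\in\GL_n(R)$. I would form $R':=R\otimes_{k(z)}k'(z)$, equipped with the unique derivation extending the one on $R$ and the derivation on $k'(z)$, and argue that $R'$ is a Picard-Vessiot ring for $\partial(y)=Ay$ over $k'(z)$. Conditions (ii) and (iii) of Definition \ref{defi:PVring} are formal: the image of $Y$ in $R'$ is still a fundamental solution matrix, and the equality $R=k(z)[Y_{i,j},\tfrac{1}{\det(Y)}]$ base-changes to $R'=k'(z)[Y_{i,j},\tfrac{1}{\det(Y)}]$.

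The main work is condition (i), namely $\partial$-simplicity of $R'$. For this I would invoke the standard differential-algebra fact that if a differential ring $R$ is $\partial$-simple with $R^\partial=k$ and $k\subseteq k'$ is any field extension, then $R\otimes_k k'$ is $\partial$-simple with constants $k'$ (this is proved, e.g., in \cite[Lem.~1.17(a)]{SVDP}). Since $k$ is algebraically closed in $k(z)$, the ring $k(z)\otimes_k k'$ is an integral domain with fraction field $k'(z)$, so $R'=R\otimes_{k(z)}k'(z)$ is the localization of $R\otimes_k k'$ at the multiplicative set of nonzero elements of $k(z)\otimes_k k'$ (embedded in $R\otimes_k k'$). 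This multiplicative set consists of non-zerodivisors and is $\partial$-stable, so localization preserves $\partial$-simplicity and $R'$ is $\partial$-simple as required. This is the step I expect to have to execute most carefully, because it requires keeping straight the interaction between the tensor product over $k$ (which handles the constants) and the tensor product over $k(z)$ (which handles the differential base field).

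Once $R'$ is identified as the Picard-Vessiot ring of $\partial(y)=Ay$ over $k'(z)$, the identification $\Gal(R'/k'(z))\cong G_{k'}$ is a functorial verification. For any $k'$-algebra $T$, regarded also as a $k$-algebra, one has a natural isomorphism
\[
R'\otimes_{k'}T \;\cong\; R\otimes_k T
\]
of differential rings over $k'(z)\otimes_{k'}T\cong k(z)\otimes_k T$, and differential $(k'(z)\otimes_{k'}T)$-algebra automorphisms of the left-hand side are exactly differential $(k(z)\otimes_k T)$-algebra automorphisms of the right-hand side, i.e., the group $G(T)$. This yields $\Gal(R'/k'(z))=G_{k'}$ as functors, hence as algebraic groups over $k'$.
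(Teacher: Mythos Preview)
Your argument for the regular-singular part matches the paper exactly: both invoke Lemma~\ref{lemma: base change for rational}. For the Galois group, the paper simply cites \cite[Lemma~3.2]{fengwib}, whereas you unpack that lemma by hand. So your route is genuinely more self-contained, and the overall strategy (show $R'=R\otimes_{k(z)}k'(z)$ is a Picard--Vessiot ring, then compare automorphism functors) is the right one.

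Two small imprecisions are worth cleaning up. First, the multiplicative set of nonzero elements of $k(z)\otimes_k k'$ is \emph{not} $\partial$-stable (constants map to zero), but this is irrelevant: what you actually need is that these elements are non-zerodivisors in $R\otimes_k k'$ (true, since $R\otimes_k k'$ is free over the domain $k(z)\otimes_k k'$), so that the derivation extends to the localization, and that contraction of $\partial$-ideals along the localization map is still a $\partial$-ideal, which follows from $\partial(a/1)=\partial(a)/1$.

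Second, the displayed isomorphism $R'\otimes_{k'}T\cong R\otimes_k T$ is not literally correct: for a $k'$-algebra $T$, the ring $k'(z)\otimes_{k'}T$ is the localization of $T[z]$ at $k'[z]\smallsetminus\{0\}$, while $k(z)\otimes_k T$ is the localization at the smaller set $k[z]\smallsetminus\{0\}$, so $R'\otimes_{k'}T$ is only a localization of $R\otimes_k T$ at elements of the fixed base. The conclusion survives, because a differential automorphism over the base is determined by $Y\mapsto YM_g$ with $M_g\in\GL_n(T)$, and the condition that $M_g$ preserve the defining ideal is unchanged by inverting base elements; but you should say this rather than assert an isomorphism. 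Alternatively, base-changing the torsor isomorphism $R\otimes_{k(z)}R\cong R\otimes_k k[G]$ along $k(z)\hookrightarrow k'(z)$ gives $R'\otimes_{k'(z)}R'\cong R'\otimes_{k'}k'[G_{k'}]$ in one line and avoids the issue entirely.
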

\begin{proof}
The statement about the singularities follows from Lemma \ref{lemma: base change for rational}. The statement about the differential Galois group follows from \cite[Lemma 3.2]{fengwib}.
\end{proof}

Our next goal is to show that if a matrix $A\in k(t)^{n\times n}$ is Gauge equivalent over $k((t))$ to a matrix $A'$ such that $tA'\in k[[t]]^{n\times n}$, then $A$ is already Gauge equivalent over $k(t)$ to such a matrix.
For the sake of clarity of the exposition, we separate the proof into a series of lemmas.

%The next few lemmas might not feel very new or deep to the reader, but we include proofs for completeness and to avoid too many references.

%\begin{lemma}\label{lem:val}
%	Let $K$ be a discretely valued field, $\l O_K$ its valuation ring and $\pi \in \l O_K$ a generator of the valuation ideal. Then if $\L \subseteq K^n$ is an $\l O_K$-lattice, i.e. of the form $\L = \l O_K e_1 + ... + \l O_K e_n$ where $(e_1,...,e_n)$ is a basis of $K^n$ and $e \in K^n$ is any vector, there exists an $a \in K^\times$ such that $ae \in \L$.
%\end{lemma}	
%\begin{proof}
%	Write $e = a_1e_1+...+a_ne_n$, and let $m := \min\{r \in \N : \forall i \le n, \pi^ra_i \in \l O_K\}$. This is well-defined because $K = \bigcup_{r \in \N} \pi^{-r} \l O_K$, and by definition we have $\pi^m e \in \L$.
%\end{proof}

\begin{lemma}\label{lem:val}
	If $\Lambda\subseteq k((t))^n$ is a $k[[t]]$-lattice and $E$ is a finite subset of $k((t))^n$, then there exists a non-zero $h\in k((t))$ such that $he\in\Lambda$ for all $e\in E$.
\end{lemma}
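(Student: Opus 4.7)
My plan is to reduce the statement to a routine ``clearing of denominators'' argument using the $t$-adic valuation on $k((t))$.

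By the definition of a $k[[t]]$-lattice, I can fix a $k((t))$-basis $e_1,\ldots,e_n$ of $k((t))^n$ such that $\Lambda = k[[t]]e_1 + \cdots + k[[t]]e_n$. Since each element of $k((t))^n$ admits a unique expansion in this basis with coefficients in $k((t))$, I would for each $e \in E$ write $e = \sum_{i=1}^n a_{i,e} e_i$ with $a_{i,e} \in k((t))$. Note then that for any $h \in k((t))$ one has $he = \sum_i (h a_{i,e}) e_i$, and this vector lies in $\Lambda$ if and only if $h a_{i,e} \in k[[t]]$ for every $i$.

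Hence the problem reduces to choosing a single non-zero $h$ such that $h a_{i,e} \in k[[t]]$ for the finitely many coefficients $a_{i,e}$ (as $i$ ranges over $1,\ldots,n$ and $e$ over the finite set $E$). Using the standard $t$-adic valuation $v$ on $k((t))$ (with $v(0) = +\infty$), I would set
\[
N = \max\bigl\{-v(a_{i,e}) \;:\; 1 \leq i \leq n,\ e \in E\bigr\},
\]
which is a well-defined integer because we take the maximum over a finite set of integers (or $-\infty$, in which case every $a_{i,e}$ is zero and any non-zero $h$ works). Taking $h = t^{\max(N,0)}$ then gives $v(h a_{i,e}) = \max(N,0) + v(a_{i,e}) \geq 0$ for all $i,e$, i.e.\ $h a_{i,e} \in k[[t]]$, which is exactly what is needed.

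There is essentially no obstacle here; the statement is purely a finiteness assertion about valuations, and the only ingredients are the basis description of the lattice and the fact that $k[[t]] = \{a \in k((t)) \mid v(a) \geq 0\}$. I would present it as a three-line argument in the paper.
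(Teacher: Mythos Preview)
Your argument is correct and essentially identical to the paper's: both expand each $e\in E$ in a lattice basis $e_1,\ldots,e_n$, then take $h=t^m$ with $m$ large enough to push all the finitely many coefficients into $k[[t]]$. The only cosmetic difference is that you phrase this via the $t$-adic valuation while the paper simply chooses, for each $e$, an integer $m_e$ with $t^{m_e}e\in\Lambda$ and sets $m=\max_e m_e$.
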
	
\begin{proof}
	Let $e_1,\ldots,e_n$ be a $k((t))$-basis of $k((t))^n$ such that $\L=k[[t]]e_1+\ldots+k[[t]]e_n$. Fix $e\in E$ and write $e = h_1e_1+...+h_ne_n$ with $h_1,\ldots,h_n\in k((t))$. Let $m_e\in\mathbb{N}$ be such that $t^{m_e}h_i\in k[[t]]$ for $i=1,\ldots,n$.	Then $t^{m_e}e\in\L$. Let $m$ be the maximum of all $m_e$'s. Then $h=t^m$ has the required property.
\end{proof}

\begin{lemma} \label{lemma: scaling lattice}
Let $(M,\nabla)$ be a differential module over $(k((t)),\frac{d}{dt})$ and $\L\subseteq M$ a $k[[t]]$-lattice such that $\nabla(\L)\subseteq \frac{1}{t}\L$. Then $h\L\subseteq M$ is a $k[[t]]$-lattice with $\nabla(h\L)\subseteq \frac{1}{t}h\L$
for any non-zero $h\in k((t))$.
\end{lemma}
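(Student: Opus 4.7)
The plan is to verify the two assertions of the statement separately: (a) $h\L$ is a $k[[t]]$-lattice in $M$, and (b) $\nabla(h\L)\subseteq \frac{1}{t}h\L$. The first is essentially bookkeeping; the second is the real content and reduces to a short computation on logarithmic derivatives.

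For (a), I would choose a $k((t))$-basis $e_1,\ldots,e_n$ of $M$ such that $\L=k[[t]]e_1+\cdots+k[[t]]e_n$. Since $h\in k((t))^\times$, the tuple $(he_1,\ldots,he_n)$ is again a $k((t))$-basis of $M$, and clearly $h\L=k[[t]](he_1)+\cdots+k[[t]](he_n)$, so $h\L$ is a $k[[t]]$-lattice in $M$.

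For (b), let $x\in\L$ and apply the Leibniz rule to get
\[
\nabla(hx)=\del(h)\,x+h\nabla(x).
\]
The second summand lies in $\frac{1}{t}h\L$ because $\nabla(x)\in\frac{1}{t}\L$ by hypothesis. For the first summand, the key observation is that $t\del(h)/h$ lies in $k[[t]]$: writing $h=t^m u$ with $m\in\mathbb{Z}$ and $u\in k[[t]]^\times$ yields $t\del(h)/h=m+t\del(u)/u$, and $\del(u)/u\in k[[t]]$ since $u$ is a unit in $k[[t]]$. Consequently,
\[
\del(h)\,x=\frac{1}{t}\,\bigl(t\del(h)/h\bigr)\cdot(hx)\in\frac{1}{t}h\L,
\]
because $t\del(h)/h\in k[[t]]$ acts by multiplication on the lattice $h\L$. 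Combining the two estimates gives $\nabla(hx)\in\frac{1}{t}h\L$, as required.

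There is no real obstacle here; the heart of the argument is the elementary observation that the logarithmic derivative $\del(h)/h$ has at most a simple pole at $t=0$ for every nonzero $h\in k((t))$. Conceptually, this lemma is the formal shadow of the well-known fact that having a regular singularity is a property of the differential module rather than of any particular lattice representing it.
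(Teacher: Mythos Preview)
Your proof is correct and follows essentially the same approach as the paper's: both use the Leibniz rule to reduce the question to showing that $t\del(h)/h\in k[[t]]$, and your verification via the decomposition $h=t^m u$ is equivalent to the paper's order comparison. The only cosmetic difference is that you spell out the lattice check in part~(a) more explicitly than the paper does.
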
	
\begin{proof}
 Clearly $h\Lambda$ is a $k[[t]]$-lattice in $M$. Furthermore, $\nabla(h \L) \subseteq \frac{d}{dt}(h)\L + h\nabla(\L) \subseteq \frac{d}{dt}(h)\L + \frac{1}{t}h\L $. So it suffices to show that $\frac{d}{dt}(h)\L\subseteq\frac{1}{t}h\L$. But $\frac{t\frac{d}{dt}(h)}{h}\in k[[t]]$ because the order of $t\frac{d}{dt}(h)$ is the order of $h$ unless $h\in k$. Thus $\frac{d}{dt}(h)\L=\frac{h}{t} \frac{t\frac{d}{dt}(h)}{h} \L \subseteq \frac{h}{t} \L=\frac{1}{t}h\L$.
\end{proof}	

As usual, we denote with $k[t]_{(t)}$ the localization of the ring $k[t]$ at the prime ideal $(t)$, i.e., $k[t]_{(t)}=\left\{\frac{h_1}{h_2}|\ h_1,h_2\in k[t],\ t \text{ does not divide } h_2\right\}$.

\begin{lemma} \label{lem:latt}
Let $e_1,\ldots,e_n$ be a $k(t)$-basis of $k(t)^n$. For $\L=k[[t]]e_1+\ldots+k[[t]]e_n\subseteq k((t))^n$ we have $\L\cap k(t)^n=k[t]_{(t)}e_1+\ldots+k[t]_{(t)}e_n$.
\end{lemma}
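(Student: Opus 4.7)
My plan is to prove the equality by the two inclusions. The inclusion $\supseteq$ is essentially a formality: since $k[t]_{(t)} \subseteq k[[t]]$, any $k[t]_{(t)}$-linear combination of the $e_i$ lies in $\L$, and of course it lies in $k(t)^n$.

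For the nontrivial inclusion $\subseteq$, let $v\in \L\cap k(t)^n$. I first want to argue that $e_1,\ldots,e_n$ remain linearly independent over $k((t))$, so that they form a $k((t))$-basis of $k((t))^n$. This follows because the matrix $E\in k(t)^{n\times n}$ whose columns are the $e_i$ is invertible in $k(t)$ (they are a $k(t)$-basis of $k(t)^n$), hence invertible in $k((t))$ under the inclusion $k(t)\subseteq k((t))$. Write $v=h_1e_1+\ldots+h_ne_n$ with uniquely determined coefficients $h_i\in k((t))$. Because $v\in\L$, the definition of $\L$ forces $h_i\in k[[t]]$ for each $i$.

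On the other hand, the column vector $(h_1,\ldots,h_n)^T$ equals $E^{-1}v$, which lies in $k(t)^n$ since both $E^{-1}\in \GL_n(k(t))$ and $v\in k(t)^n$. Hence each $h_i\in k(t)$. Combining the two, $h_i\in k[[t]]\cap k(t)$.

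Finally, I invoke the standard identification $k[[t]]\cap k(t)=k[t]_{(t)}$: a rational function $p/q$ in lowest terms lies in $k[[t]]$ if and only if $q(0)\neq 0$, i.e., $t\nmid q$, i.e., $p/q\in k[t]_{(t)}$. Therefore each $h_i\in k[t]_{(t)}$, and $v\in k[t]_{(t)}e_1+\ldots+k[t]_{(t)}e_n$, as desired. The whole argument is routine; there is no real obstacle beyond keeping track of which ring each coefficient lives in, and the only substantive ingredient is the well-known fact that $k[t]_{(t)}$ is precisely the ring of rational functions regular at $t=0$.
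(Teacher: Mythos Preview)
Your proof is correct and follows essentially the same approach as the paper's: both establish the two inclusions, with the nontrivial one handled by writing an element uniquely in the $k((t))$-basis $e_1,\ldots,e_n$, observing that membership in $\L$ forces the coefficients into $k[[t]]$, membership in $k(t)^n$ forces them into $k(t)$, and then invoking $k[[t]]\cap k(t)=k[t]_{(t)}$. You simply spell out a couple of points (why the $e_i$ remain a $k((t))$-basis, and why the intersection equals $k[t]_{(t)}$) that the paper leaves implicit.
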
	
\begin{proof}
	The inclusion $k[t]_{(t)}e_1+\ldots+k[t]_{(t)}e_n\subseteq \L\cap k(t)^n$ follows from $k[t]_{(t)}\subseteq k[[t]]$.
	For the reverse inclusion, consider $m\in\L\cap k(t)^n$. As $e_1,\ldots,e_n$ is a $k((t))$-basis of $k((t))^n$, we have $m=a_1e_1+\ldots+a_ne_n$ for uniquely determined $a_1,\ldots,a_n\in k((t))$. As $m\in \L$, it follows that $a_i\in k[[t]]$ and as $m\in k(t)^n$ is follows that $a_i\in k(t)$. Thus $a_i\in k[[t]]\cap k(t)=k[t]_{(t)}$ for $i=1,\ldots,n$ and so $m\in k[t]_{(t)}e_1+\ldots+k[t]_{(t)}e_n$.
\end{proof}	

%\begin{lemma}\label{lem:latt}
%	
%	Let $K \subseteq L$ be an inclusion of non-archimedean valued fields, $\l O_K$, $\l O_L$ the corresponding valuation rings, and assume that $K \cap \l O_L = \l O_K$. Let $M$ be a $K$-vector space of finite dimension $n \ge 1$.
%	Let $e_1,...,e_n$ be a basis of $M$, and $\L = \l O_L e_1 + ... + \l O_L e_n$ the $\l O_L$-lattice in $M \otimes_K L$ generated by this basis. Then $\L \cap M = \l O_K e_1 + ... + \l O_K e_n$, and is an $\l O_K$-lattice in $M$.
%	
%\end{lemma}
%
%\begin{proof}
%	
%	We identify $M$ with a submodule of $M \otimes_K L$ by $m \mapsto m \otimes 1$. Any $v \in \L \cap M$ can be uniquely written as a linear combination of $e_i$ with coefficients in $\l O_L$ (because it is in $\L$) and in $K$ (because it is in $M$). Since the $(e_i)_i$ form a $L$-basis of $M \otimes_K L$, both are equal and the coefficients are in $\l O_L \cap K = \l O_K$. Conversely, any element of $\l O_K e_1+...+\l O_K e_n$ is in $\L \cap M$.
%	
%\end{proof}

The following lemma is a variant of Nakayama's Lemma. For a proof see e.g., \cite[Chapter~X, Theorem 4.4]{Lang:Algebra}.

\begin{lemma}\label{lem:fingen}
	Let $R$ be a local ring with maximal ideal $\fk m$ and residue field $k=R/\fk m$. If $M$ is a free $R$-module of finite rank, then a tuple of elements from $M$ is an $R$-basis of $M$ if and only if their images are a $k$-basis of $M/\fk m M$.	\qed
\end{lemma}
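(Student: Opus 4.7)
The plan is to reduce the statement to a linear algebra fact about matrices over the local ring $R$. Since $M$ is free of finite rank $n$, reducing a chosen $R$-basis of $M$ modulo $\fk m$ yields a $k$-basis of $M/\fk m M$, so $\dim_k M/\fk m M = n$. In particular, any tuple that is an $R$-basis of $M$ or whose image is a $k$-basis of $M/\fk m M$ must have length $n$, so I can restrict attention to $n$-tuples $(m_1,\ldots,m_n)$ from the start.

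Fix an $R$-basis $f_1,\ldots,f_n$ of $M$ and write $m_i = \sum_j a_{ij} f_j$ with $a_{ij}\in R$, obtaining a matrix $A=(a_{ij})\in R^{n\times n}$. Standard free-module theory gives that $(m_1,\ldots,m_n)$ is an $R$-basis of $M$ if and only if $A\in\GL_n(R)$. The same reasoning, applied to the $k$-vector space $M/\fk m M$ with basis $\overline{f_1},\ldots,\overline{f_n}$, shows that $(\overline{m_1},\ldots,\overline{m_n})$ is a $k$-basis if and only if the reduction $\overline{A}\in k^{n\times n}$ lies in $\GL_n(k)$. Hence the lemma reduces to the equivalence $A\in\GL_n(R)$ iff $\overline{A}\in\GL_n(k)$.

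This last equivalence is precisely the defining feature of a local ring: since $\det$ commutes with the quotient map, $\overline{\det(A)}=\det(\overline{A})$, and an element of $R$ is a unit exactly when its image in the residue field $k$ is non-zero. Therefore $\det(A)\in R^\times$ if and only if $\det(\overline{A})\neq 0$, completing the equivalence. I don't anticipate any real obstacle; the only thing worth taking care of is making sure both ``basis'' statements are recognized as equivalent to invertibility of the same change-of-basis matrix, one over $R$ and one over $k$, after which the local ring hypothesis closes the argument.
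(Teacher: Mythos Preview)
Your argument is correct: reducing to the invertibility of the change-of-basis matrix and using that in a local ring an element is a unit precisely when its residue is non-zero is the standard route, and all the steps (invariant basis number for commutative rings, invertibility via the determinant and adjugate, compatibility of $\det$ with the quotient map) are justified. The paper does not supply its own proof of this lemma but simply cites \cite[Chapter~X, Theorem~4.4]{Lang:Algebra}, so there is nothing further to compare against.
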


Recall that the condition of being regular singular is characterized via local formal gauge transformations. The following lemma shows that in fact it is enough to consider rational gauge transformations. We believe this lemma is well-known but we could not find a suitable reference. For statements of a similar spirit in the analytic context see \cite[Lemma 12.1]{Saul} and \cite[Lemma 3.42]{SVDP}.
%This next lemma establishes that if a regular singular matrix has rational functions as coefficients, then its regular singularity can be seen with gauge transformations in $k(z)$ instead of $k((z))$.

\begin{lemma}\label{lem:basis}
	Let $A \in k(t)^{n\times n}\subseteq k((t))^{n\times n}$ be such $\del(y)=Ay$ is regular singular. Then there
	exists a matrix $F\in \GL_n(k(t))$ such that $A'=FAF^{-1}+\del(F)F^{-1}$ satisfies $tA'\in k[t]_{(t)}^{n\times n}$. In other words, if $A$ is Gauge equivalent over $k((t))$ to a matrix $A'$ satisfying $tA'\in  k[[t]]^{n\times n}$, then $A$ is already Gauge equivalent over $k(t)$ to such a matrix.
\end{lemma}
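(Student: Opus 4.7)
The plan is to construct a $k[t]_{(t)}$-lattice $\tilde\Lambda\subseteq k(t)^n$ satisfying $\nabla_A(\tilde\Lambda)\subseteq \tfrac{1}{t}\tilde\Lambda$. Once such a lattice is in hand, any $k[t]_{(t)}$-basis $(\tilde e_1,\ldots,\tilde e_n)$ of $\tilde\Lambda$ is also a $k(t)$-basis of $k(t)^n$, and the matrix $F\in\GL_n(k(t))$ expressing the standard basis in terms of $(\tilde e_1,\ldots,\tilde e_n)$ produces, via the formula \eqref{eq:gauge equivalence}, a Gauge-equivalent matrix $A'$ whose matrix entries in the new basis all have at most a first order pole at $t=0$, i.e., $tA'\in k[t]_{(t)}^{n\times n}$.

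The natural candidate is $\tilde\Lambda=\Lambda\cap k(t)^n$, where $\Lambda\subseteq k((t))^n$ is a $k[[t]]$-lattice provided by the regular singular hypothesis with $\nabla_A(\Lambda)\subseteq \tfrac{1}{t}\Lambda$. The obstacle is that $\Lambda$ need not admit a $k(t)$-rational basis, so Lemma \ref{lem:latt} does not directly identify $\tilde\Lambda$ as a $k[t]_{(t)}$-lattice. To fix this, I would \emph{replace} the given basis of $\Lambda$ by a rational approximation. Pick any $k[[t]]$-basis $e_1,\ldots,e_n$ of $\Lambda$ and choose $N\geq 0$ large enough that $t^Nk[[t]]^n\subseteq\Lambda$ (possible since both are $k[[t]]$-lattices in $k((t))^n$). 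Truncating each Laurent series coordinate of $e_i$ at order $t^{N+1}$ yields $\tilde e_i\in k[t]^n$ with $\tilde e_i-e_i\in t^{N+1}k[[t]]^n\subseteq t\Lambda$. Lemma \ref{lem:fingen} applied to the free $k[[t]]$-module $\Lambda$ then guarantees that $(\tilde e_1,\ldots,\tilde e_n)$ is still a $k[[t]]$-basis of $\Lambda$; in particular the $\tilde e_i$ are $k((t))$-linearly independent and hence a fortiori $k(t)$-linearly independent.

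Setting $\tilde\Lambda:=k[t]_{(t)}\tilde e_1+\cdots+k[t]_{(t)}\tilde e_n$, Lemma \ref{lem:latt} gives $\tilde\Lambda=\Lambda\cap k(t)^n$, and the same reasoning applied to the basis $(\tfrac{1}{t}\tilde e_1,\ldots,\tfrac{1}{t}\tilde e_n)$ of $\tfrac{1}{t}\Lambda$ gives $\tfrac{1}{t}\tilde\Lambda=\tfrac{1}{t}\Lambda\cap k(t)^n$. Since $A\in k(t)^{n\times n}$ and $\tilde e_j\in k(t)^n$, we have $\nabla_A(\tilde e_j)\in k(t)^n$; since $\tilde e_j\in\Lambda$, we also have $\nabla_A(\tilde e_j)\in\tfrac{1}{t}\Lambda$. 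Combining these, $\nabla_A(\tilde e_j)\in\tfrac{1}{t}\tilde\Lambda$ for every $j$, whence $\nabla_A(\tilde\Lambda)\subseteq\tfrac{1}{t}\tilde\Lambda$. Translating this back through the change-of-basis matrix $F$ yields the required Gauge equivalence over $k(t)$. The only delicate point in the argument is verifying that the truncated vectors still form a $k[[t]]$-basis of the original lattice $\Lambda$, and this is handled cleanly by Nakayama's lemma in the form of Lemma \ref{lem:fingen}.
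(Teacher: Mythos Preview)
Your argument is correct, and it takes a genuinely different (and somewhat shorter) route than the paper's proof.

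The paper proceeds as follows: after scaling $\Lambda$ so that the standard basis $e_1,\ldots,e_n$ lies in $\Lambda$ (using Lemmas~\ref{lem:val} and~\ref{lemma: scaling lattice}), it builds an auxiliary $k[[t]]$-sublattice $\Lambda'\subseteq\Lambda$ generated by all iterates $(t\nabla_A)^i(e_j)$. These generators are automatically in $k(t)^n$ because $A$ is rational; Nakayama (Lemma~\ref{lem:fingen}) then lets one extract finitely many of them forming a $k[[t]]$-basis of $\Lambda'$, and Lemma~\ref{lem:latt} identifies $\Lambda'\cap k(t)^n$ as the desired $k[t]_{(t)}$-lattice stable under $t\nabla_A$. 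In contrast, you keep the original lattice $\Lambda$ and produce a rational $k[[t]]$-basis of it directly, by truncating an arbitrary basis modulo a power of $t$ lying inside $t\Lambda$ and invoking Nakayama. This sidesteps the scaling step, the auxiliary sublattice, and Lemmas~\ref{lem:val} and~\ref{lemma: scaling lattice} entirely; the only tools you need beyond Nakayama are Lemma~\ref{lem:latt} and the commensurability of lattices. The paper's construction has the mild conceptual advantage that its rational basis is produced by the differential operator itself, but for the purpose of this lemma your approximation argument is more economical.

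One small correction: your truncated vectors $\tilde e_i$ lie in $k[t,t^{-1}]^n$, not in $k[t]^n$, since the $e_i$ may have finitely many negative powers of $t$ in their coordinates. This is harmless, as all that is needed is $\tilde e_i\in k(t)^n$.
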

\begin{proof}
	As $\del(y)=Ay$ is regular singular, there exists a $k[[t]]$-lattice $\Lambda$ in $k((t))^n$ such that $\nabla_A(\Lambda)\subseteq \frac{1}{t}\Lambda$, where $\nabla_A=\frac{d}{dt}-A$. 
	%So $\nabla_\delta(\Lambda)\subseteq\Lambda$, where $\nabla_\delta= z\nabla_A=z \frac{d}{dz} - zA$. Note that $(k((z))^n,\nabla_\delta)$ is a differential module over the differential field $(k((z)), \delta)$, where $\delta= z \frac{d}{dz}$ is the Euler derivative.
	
%%	By the cyclic vector lemma (\cite[Prop. 2.9]{SVDP}), there exists a cyclic vector for the differential module $(k(z)^n,\nabla_\delta)$ over the differential field $(k(z),\delta)$, i.e., there exists $e\in k(z)^n$ such that $e,\nabla_\delta(e),\nabla_\delta^2(e),\ldots,\nabla_\delta^{n-1}(e)$ is a $k(z)$-basis of $k(z)^n$. 
%	As a $k(z)$-basis of $k(z)^n$ is also a $k((z))$\=/basis of $k((z))^n$,
%	%(e.g., because this can be tested by the non-vanishing of an $n\times n$-determinant),
%	 we see that $e$ is also a cyclic vector for the differential module $(k((z))^n,\nabla_\delta)$ over the differential field $(k((z)),\delta)$.
%	

Let $\underline{e}=(e_1,\ldots,e_n)$ be the standard basis of $k(t)^n$. By Lemma \ref{lem:val} there exists a non-zero $h\in k((t))$ such that $he_i\in \L$ for $i=1,\ldots,n$. Then $\frac{1}{h}\L$ is a $k[[t]]$-lattice in $k((t)))^n$ with $\nabla_A(\frac{1}{h}\L)\subseteq \frac{1}{t}\frac{1}{h}\L$ (Lemma~\ref{lemma: scaling lattice}) and $e_1,\ldots,e_n\in \frac{1}{h}\L$.
Replacing $\L$ with $\frac{1}{h}\L$, we can thus assume that $e_1,\ldots,e_n\in \L$.
	
	Let $\L'$ be the  $k[[t]]$-submodule of $\L$ generated by all $(t\nabla_A)^i(e_j)$, where $i\geq 0$, and $j\in \{1,\ldots,n\}$. Note that, by construction, $t\nabla_A(\L')\subseteq \L'$.

	Recall (e.g. from \cite[Chapter III, Theorem 7.1]{Lang:Algebra}) that a submodule of a free module over a principal ideal domain is free of rank bounded by the rank of the ambient module. As $k[[t]]$ is a principal ideal domain and $\Lambda$ is a free $k[[t]]$-module of rank $n$, it follows that $\L'$ is a free $k[[t]]$-module of rank at most $n$.
	As $\L'$ contains the standard basis $\underline{e}$, we see that in fact $\L'$ is free of rank $n$.
	So $\L'/t\L'$ is a $k$-vector space of dimension $n$. Since the $(t\nabla_A)^i(e_j)$'s generate $\L'$ as a $k[[t]]$-module, their images generate  $\L'/t\L'$ as a $k$-vector space. We can select $n$ of these generators, say $e_1',\ldots,e_n'$, such that their images are $k$-basis of $\L'/t\L'$. It follows from Lemma~\ref{lem:fingen}  that $e_1',\ldots,e_n'$ are a $k[[t]]$-basis of $L'$. In particular,  $e_1',\ldots,e_n'$ are $k[[t]]$-linearly independent and they lie in $k(t)^n$. Thus $e_1',\ldots,e_n'$ are a $k(t)$-basis of $k(t)^n$. 
	
	As the $(t\nabla_A)^i(e_j)$'s generate $\L'$ as a $k[[t]]$-module, Lemma \ref{lem:fingen} shows that we can select $n$ of these generators, say $e_1',\ldots,e_n'$, such that they form a $k[[t]]$-basis of $\L'$. In particular,  $e_1',\ldots,e_n'$ are $k[[t]]$-linearly independent and they lie in $k(t)^n$. Thus $\underline{e}=(e_1',\ldots,e_n')$ is a $k(t)$-basis of $k(t)^n$. 

	Lemma \ref{lem:latt} applied to $e_1',\ldots,e_n'$ and $\L'=k[[t]]e_1'+\ldots+k[[t]]e_n'$ shows that 
	$\L'\cap k(t)^n=k[t]_{(t)}e'_1+\ldots+k[t]_{(t)}e'_n$. As $\L'$ and $k(t)^n$ are stable under $t\nabla_A$, it follows that also $k[t]_{(t)}e'_1+\ldots+k[t]_{(t)}e'_n$ is stable under $t\nabla_A$. That is, if we write $\nabla_A(\underline{e}')=\underline{e'}(-A')$, then $tA'\in k[t]_{(t)}^{n\times n}$. For $F\in\GL_n(k(t))$ with $\underline{e}=\underline{e}'F$, we have $A'=FAF^{-1}+\del(F)F^{-1}$ as desired.
\end{proof}

\subsection{Generating algebraic groups}

In this short section we consider algebraic groups generated by finitely many elements and how this property behaves under base change.

Let $G$ be an algebraic group over $k$ and $g_1,\ldots,g_d\in G(k)$. The intersection of all closed subgroups $H$ of $G$ such that $g_1,\ldots,g_d\in H(k)$, is a closed subgroup of $G$ that also satisfies this property. Thus there exists a smallest closed subgroup of $G$ whose $k$-points contain $g_1,\ldots,g_d$. If this closed subgroup agrees with $G$, we say that $g_1,\ldots,g_d$ \emph{generated $G$ (as an algebraic group)}. 

Let $\Gamma$ be the (abstract) subgroup of $G(k)$ generated by $g_1,\ldots,g_d$. As the Zariski closure of $\Gamma$ is a subgroup of $G(k)$ (\cite[Lemma 2.2.4]{Springer:LinearAlgebraicGroups}), we see that $g_1,\ldots,g_d$ generate $G$ if and only if $\Gamma$ is Zariski dense in $G$. This means that $g_1,\ldots,g_d$ generate $G$ if and only if $f(\gamma)=0$ for all $\gamma\in\Gamma$ implies $f=0$ for $f\in k[G]$.

\begin{lemma}\label{lem:groupgen} Let $k \subseteq k'$ be an inclusion of algebraically closed fields, $G$ an algebraic group over $k$ and $g_1,\ldots,g_d\in G(k)$. Then 
	$g_1,\ldots,g_d$ generate $G$ if and only if $g_1,\ldots,g_d$ generate $G_{k'}$.
\end{lemma}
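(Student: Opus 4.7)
The plan is to translate both sides of the equivalence into the ``separating functions'' criterion stated just before the lemma, and then use the fact that the base change from $k$ to $k'$ is faithfully flat.

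Let $\Gamma \leq G(k)$ be the abstract subgroup generated by $g_1,\ldots,g_d$; it is naturally identified with its image in $G_{k'}(k') = G(k')$ under $G(k) \hookrightarrow G(k')$. By the characterization recalled just before the lemma, $g_1,\ldots,g_d$ generate $G$ if and only if the ideal
$$ I \;=\; \{f \in k[G] \mid f(\gamma)=0 \text{ for all }\gamma\in\Gamma\} $$
is zero, and analogously $g_1,\ldots,g_d$ generate $G_{k'}$ if and only if
$$ I' \;=\; \{f \in k'[G_{k'}] \mid f(\gamma)=0 \text{ for all }\gamma\in\Gamma\} $$
is zero. Since $k'[G_{k'}] = k[G]\otimes_k k'$, the core of the proof will be to show $I' = I\otimes_k k'$, after which the equivalence follows: $I = 0 \Leftrightarrow I\otimes_k k' = 0 \Leftrightarrow I' = 0$, using that $k'$ is faithfully flat over $k$ (which is automatic since $k'$ is a nonzero free $k$-module).

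For the identity $I' = I\otimes_k k'$, the inclusion $I\otimes_k k' \subseteq I'$ is immediate. For the reverse inclusion, I would pick a $k$-basis $(c_\lambda)_\lambda$ of $k'$ and write an arbitrary $f \in I'$ uniquely as $f = \sum_\lambda f_\lambda \otimes c_\lambda$ with $f_\lambda \in k[G]$, almost all zero. For $\gamma \in \Gamma \subseteq G(k)$ one has $f(\gamma) = \sum_\lambda f_\lambda(\gamma) c_\lambda = 0$ in $k'$, and since $f_\lambda(\gamma) \in k$ and the $c_\lambda$ are $k$-linearly independent, each $f_\lambda(\gamma) = 0$. Hence every $f_\lambda$ lies in $I$, so $f \in I\otimes_k k'$.

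No step looks like a serious obstacle; the only thing to be careful about is that one is genuinely using the same abstract group $\Gamma$ on both sides (not its Zariski closure, which can a priori change with the base field — indeed, that is exactly what the lemma controls), and that the ``separating functions'' formulation, rather than the geometric Zariski-closure formulation, is what interacts cleanly with the tensor product $k[G] \otimes_k k'$.
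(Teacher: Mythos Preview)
Your proof is correct and is essentially the same argument as the paper's: both pick a $k$-basis of $k'$, expand an element of $k'[G_{k'}]=k[G]\otimes_k k'$ accordingly, and use linear independence together with $f_\lambda(\gamma)\in k$ to reduce to the criterion over $k$. Your packaging via the identity $I'=I\otimes_k k'$ and faithful flatness is a mild reformulation, but the substance is identical.
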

\begin{proof}
	Let $\Gamma$ be the (abstract) subgroup of $G(k)\subseteq G(k')=G_{k'}(k')$ generated by $g_1,\ldots,g_d$.
	First assume that $g_1,\ldots,g_d$ generate $G$. We have to show that $f'(\gamma)=0$ for all $\gamma\in\Gamma$ implies $f'=0$ for $f'\in k'[G_{k'}]=k[G]\otimes_k k'$. Let $(\lambda_i)_{i\in I}$ be $k$-basis of $k'$. Then $f'$ can be written as $f'=\sum f_i\otimes\lambda_i$ with $f_i\in k[G]$. For $\gamma\in \Gamma$, we have $0=f'(\gamma)=\sum f_i(\gamma)\lambda_i$. As the $\lambda_i$ are $k$-linearly independent and $f_i(\gamma_i)\in k$, it follows that $f_i(\gamma)=0$ for all $\gamma\in\Gamma$ and $i\in I$. Because $g_1,\ldots,g_d$ generate $G$, this implies that $f_i=0$. So $f'=0$ as desired.
	%	Write $\m I(G)$ the Hopf ideal of $G$, one has $\m I(G_{k'}) = \m I(G) \otimes_k k'$.\\
	%	For the direct implication, we want to prove that if a polynomial $\in k'[\GL_n] = k[\GL_n] \otimes_k k'$ is zero on all elements $\gamma \in \Gamma$, then it belongs to $\m I(G_{k'})$.Let $Q$ be such a polynomial, write $Q = \sum_{i=1}^r Q_i \otimes \lambda_i$ with $\lambda_i$ free over $k$ and $Q_i \in k[\GL_n]$. Then for all $\gamma \in \Gamma$, we have $0 = Q(\gamma) = \sum Q_i(\gamma) \lambda_i$, meaning $Q_i(\gamma) = 0$ for all $i$. Since $\Gamma$ is Zariski-dense in $G$, the $Q_i$ are all in $\m I(G)$ and we obtain $Q \in \m I(G)\otimes_k k' = \m I(G_{k'})$, proving that $\Gamma$ is Zariski-dense in $G_{k'}$.\\
	
	Conversely, assume that $g_1,\ldots,g_d$ generate $G_{k'}$. Then an $f'\in k'[G_{k'}]=k[G]\otimes_k k'$ that vanishes on all of $\Gamma$ must be zero. In particular, this holds for $f\in k[G]$ and therefore $g_1,\ldots,g_d$ generate $G$.
	%
	%
	%
	%	The converse is obtained by noticing that $G(k)$ can be identified with a subset of $G_{k'}(k') = G(k')$ in a way that is compatible with polynomial evaluation : if we consider elements of $\Gamma$, $G(k)$ and $G(k')$ as actual matrices, then all we are saying is that a polynomial $P \in k[\GL_n] \subseteq k'[\GL_n]$ that vanishes on $\Gamma$ vanishes on all of $G(k')$, so it vanishes in particular on all of $G(k)$.
\end{proof}
%
%A consequence of the above lemma is that the minimal number of elements needed to generate an algebraic group can only go down under base change.

\subsection{Specializations}

The proof strategy for our main result is to deduce it from the already known special case $k=\mathbb{C}$ via a specialization argument. In this section we recall the specialization theorem from \cite{fengwib} required for this approach. We also show that the property of being regular singular is preserved under specialization in an appropriate sense.

%The core theorem we need to exploit the already known result over $\C$ is the specialization theorem from \cite{fengwib}, and we need a few more definitions in order to state it : notably, we recall the definition of differential group actions and differential torsors.

We begin by recalling some terminology from \cite{fengwib}. Let $\l B$ be a $k$-algebra (considered as a constant differential ring). The specializations we will be considering are morphisms $c\in \Hom_k(\l B,k)$, i.e., $c\colon \l B\to k$ is a morphism of $k$-algebras. We consider $\l B[z]$ as a differential ring with respect to the derivation $\frac{d}{dz}$. For a monic polynomial $f\in \l B[z]$ a specialization $c\colon \l B\to k$ extends to a morphism $c\colon \l B[z]_f\to k(z)$ of differential $k$-algebras via $c(z)=z$.
For a matrix $\l A\in \l B[z]_f^{n\times n}$ we denote with $A^c\in k(z)^{n\times n}$ the matrix obtained from $\l A$ by applying $c\colon \l B[z]_f\to k(z)$ to the entries of $\l A$. Similarly, for $a\in \l B$ we will write $a^c$ rather than $c(a)$.

 Assuming that $\l B$ is an integral domain, so that we can consider the algebraic closure $K$ of the field of fractions of $\l B$, the specialization problem is about comparing the differential Galois group of the generic equation $\del(y)=\l A y$ (over $K(z)$) with the differential Galois group of the specialized equations $\del(y)=A^cy$ (over $k(z)$). To capture the idea of specializing differential Galois groups, we need to consider group schemes over $\l B$ and we also need some more terminology.

 Let $\l B$ be a $k$-algebra and let $\l Q$ be a differential $\l B$-algebra. (The most relevant case for us is $\l Q=\l B[z]_f$.) For a differential $\l Q$-algebra $\l R$, we define $\underline{\Aut}(\l R/\l Q)$ as the functor form the category of $\l B$-algebras to the category of groups given by $\underline{\Aut}(\l R/\l Q)(\l T) = \Aut^\del(\l R \otimes_{\l B} \l T / \l Q \otimes_{\l B} \l T)$, where $\l T$ is considered as a constant differential ring. On morphisms, $\underline{\Aut}(\l R / \l Q)$ is defined by base change.
An action of an affine group scheme $\l G$ over $\l B$ on $\l R/\l Q$ is a morphism of group functors $\l G \to \underline{\Aut}(\l R/\l Q)$.

%\begin{definition}[Algebraic group actions]
%	
%	Let $k$ be an algebraically closed field of characteristic zero, $\l B$ a $k$-algebra, and $\l Q$ a differential $\l B$-algebra.
%	
%	\noindent
%	For a differential $\l Q$-algebra $\l R$, we define $\underline{\Aut}(\l R/\l Q)$ as the functor on $\l B$-algebras given by $\underline{\Aut}(\l R/\l Q)(\l T) := \Aut^\del(\l R \otimes \l T / \l Q \otimes \l T)$ where $\l T$ is taken as a constant ring. On morphisms, $\underline{\Aut}(\l R / \l Q)$ is defined by base change.
%	
%	\noindent
%	A (differential) action of an affine group scheme $\l G$ over $\l B$ on $\l R/\l Q$ is a morphism of group functors $\l G \to \underline{\Aut}(\l R/\l Q)$.
%	
%\end{definition}

Let $\l B[\l G]$ denote the coordinate ring of $\l G$, i.e., the $\l B$-algebra representing $\l G$. By \cite[Lemma 3.3]{fengwib}, to specify an action of $\l G$ on $\l R / \l Q$ is equivalent to specifying a morphism (the coaction) $\rho\colon \l R \to \l R \otimes_{\l B} \l B[\l G]$ of $\l Q$-$\del$-algebras such that 
\[\begin{tikzcd} \l R \otimes_{\l B} \l B[\l G] \otimes_{\l B}\l B[\l G] & \l R \otimes_{\l B} \l B[\l G] \arrow[swap]{l}{\rho \otimes \text{id}}\\ \l R \otimes_{\l B} \l B[\l G] \arrow{u}{\text{id}\otimes \Delta} & \l R \arrow[swap]{u}{\rho} \arrow{l}{\rho} \end{tikzcd}\qquad \text{ and }\qquad \begin{tikzcd}  \l R \otimes_{\l B} \l B[\l G] \arrow[swap]{d}{\text{id} \otimes \varepsilon} & \l R \arrow[swap]{l}{\rho} \\ \l R \otimes_{\l B} \l B \arrow[ur, equal]\end{tikzcd}\]
 commute, where $\Delta$ and $\varepsilon$ denote the comultiplicaiton and counit of the Hopf-algebra $\l B[\l G]$.

\begin{definition} \label{def: differential torsor}
%	Let $k$ be an algebraically closed field of characteristic zero, $\l B$ a $k$-algebra, $\l Q$ a differential $\l B$-algebra, $\l R$ a differential $\l Q$-algebra, $\l G$ an affine group scheme over $\l B$ and assume $\l G$ acts on $\l R / \l Q$.
%	\\
Assume, as above, that $\l G$ acts $\l R / \l Q$. Then $\l R / \l Q$ is a \emph{differential $\l G$-torsor} if the morphism $\l R \otimes_{\l Q} \l R \to \l R \otimes_{\l B} \l B[\l G]$, $\ a \otimes b \mapsto (a \otimes 1) \cdot \rho(b)$ is an isomorphism. Moreover, for $\l A \in \l Q^{n \times n}$, if there exists a matrix $\l Y \in \GL_n(\l R)$ such that
	\begin{itemize}
		\item $\del(\l Y) = \l A \l Y$,
		\item $\l R = \l Q\lb \l Y_{i,j}, \frac{1}{\det(\l Y)}\rb$, and
		\item for every $\l B$-algebra $\l T$ and every $g \in \l G(\l T)$, there exists a matrix $M_g \in \GL_n(\l T)$ such that $g(\l Y \otimes 1) = (\l Y \otimes 1)(1 \otimes M_g)$,
	\end{itemize}
	 then $\l R / \l Q$ is a \emph{differential $\l G$-torsor for $\del(y) = \l Ay$}.
\end{definition}

The definition of a differential torsor can be interpreted geometrically: With $\l Z = \Spec(\l R)$, the coaction $\rho\colon \l R \to \l R \otimes_{\l B} \l B[\l G]=\l R\otimes_{\l Q}\l Q\otimes_{\l B }\l B[\l G]$ induces a group action $\l Z \times_{\l Q} \l G_{\l Q} \to \l Z$ (in the category of $\l Q$-schemes) and $\l R/\l Q$ is a differential $\l G$-torsor if and only if $\l Z \times_{\l Q} \l G_{\l Q} \to \l Z \times_{\l Q} \l Z$ is an isomorphism.
% of schemes over $\l Q$. In particular, for any $\l Q$-algebra $\l S$, $\l Z(\l S)$ is a $\l G_{\l Q}(\l S)$-torsor in the usual group-theoretic sense.

Definition \ref{def: differential torsor} mimics and generalizes the defintion of a Picard-Vessiot ring.
In particular, if $R/K$ is a Picard-Vessiot ring for $\del(y)=Ay$ (Definition \ref{defi:PVring}), then $R/K$ is a 
differential $\Gal(R/K)$-torsor for $\del(y) = A y$.

The following specialization theorem is the key to our specialization argument.

\begin{theorem}[\cite{fengwib}, Theorem 4.26] \label{thm:spe}
	Assume that
	\begin{itemize}
		%\ibul $k$ is an algebraically closed field of characteristic zero,
		\ibul $\l B$ is a finitely generated $k$-algebra and an integral domain,
		\ibul $K$ is the algebraic closure of the field of fractions of $\l B$,
		\ibul $\l G$ is an affine group scheme of finite type over $\l B$,
		\ibul $f \in \l B[z]$ is a monic polynomial,
		\ibul $\l A \in \l B[z]^{n \times n}_f$,
		\ibul $\l R/\l B[z]_f$ is a differential $\l G$-torsor for $\del(y) = \l A y$ such that $\l R$ is flat over $\l B[z]_f$,
		\ibul $\l R \otimes_{\l B[z]_f} K(z)$ is $\del$-simple.
	\end{itemize}
	% $R^{\textup{gen}} =
	%\noindent
	Then there exists a $c \in \Hom_{k}(\l B, k)$ such that $R^c=\l R \otimes_{\l B[z]_f} k(z)$ is a Picard-Vessiot ring for $\del(y)=A^cy$ with differential Galois group $\l G_k$, where the involved base change is $c\colon \l B \to k$.
	
\end{theorem}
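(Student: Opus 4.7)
The plan is to verify that the differential $\l G$-torsor structure of $\l R/\l B[z]_f$ passes to the specialization for every $c\in\Hom_k(\l B,k)$, and then to select a specific $c$ for which the specialized ring $R^c$ is additionally $\del$-simple. These two properties together force $R^c$ to be a Picard-Vessiot ring for $\del(y)=A^cy$ with differential Galois group $\l G_k$, since the torsor isomorphism together with $\del$-simplicity is the geometric characterization of a Picard-Vessiot ring.

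For the first task, fix any $c\in\Hom_k(\l B,k)$; the extension $c\colon \l B[z]_f\to k(z)$ sending $z\mapsto z$ is a morphism of differential $k$-algebras. Flatness of $\l R$ over $\l B[z]_f$ lets me base-change the torsor isomorphism $\l R\otimes_{\l Q}\l R\to \l R\otimes_{\l B}\l B[\l G]$ along $c$, yielding the analogous isomorphism $R^c\otimes_{k(z)}R^c\to R^c\otimes_{k}k[\l G_k]$. The matrix $\l Y$ specializes to $Y^c=\l Y\otimes 1\in\GL_n(R^c)$ with $\del(Y^c)=A^cY^c$, and both the generation and equivariance conditions transfer. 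Thus $R^c/k(z)$ is automatically a differential $\l G_k$-torsor for $\del(y)=A^cy$. Applying the same argument at the generic point of $\Spec(\l B)$ (and then to $K$), $\l R\otimes_{\l B[z]_f}K(z)$ is a $\l G_K$-torsor which, combined with the hypothesized $\del$-simplicity, realizes it as the Picard-Vessiot ring for $\del(y)=\l A y$ over $K(z)$ with Galois group $\l G_K$.

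The main obstacle is the second task: finding a closed $k$-point $c$ at which $\del$-simplicity survives specialization. My strategy is to show the locus
\[ U=\{c\in\Hom_k(\l B,k)\ |\ R^c\text{ is }\del\text{-simple}\} \]
is non-empty by a spreading-out argument. Suppose instead that for each $c$ in a Zariski-dense set of closed points $R^c$ admits a proper nontrivial $\del$-ideal $I_c$; the goal is to assemble these (or an infinite subfamily of them) into a proper nontrivial $\del$-ideal of $\l R\otimes_{\l B}\l B'$ for some localization $\l B'$ of $\l B$, which upon further base change would yield a proper nontrivial $\del$-ideal in $\l R\otimes_{\l B[z]_f}K(z)$, contradicting generic $\del$-simplicity. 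The delicate input is that, because $\Spec(\l R)/\Spec(\l B[z]_f)$ is a $\l G$-torsor, closed subschemes of $\Spec(R^c)$ are controlled (via the torsor isomorphism $\l Z\times_{\l Q}\l G_{\l Q}\cong \l Z\times_{\l Q}\l Z$) by closed subschemes of $\l G_k$. This bounds the complexity of the family of potential obstructions uniformly in $c$ and makes the relevant loci in $\Spec(\l B)$ constructible, so that the spreading-out contradiction actually goes through. Once a $c\in U$ is produced, the $\del$-simplicity of $R^c$, its $\l G_k$-torsor structure, and the presence of $Y^c$ together identify it as the Picard-Vessiot ring of $\del(y)=A^cy$ with differential Galois group $\l G_k$.
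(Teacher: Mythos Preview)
The paper does not prove this theorem; it is quoted from \cite{fengwib} (their Theorem~4.26) and used as a black box in the proof of Theorem~\ref{thm:inv}. There is therefore no proof in the present paper to compare your attempt against.

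On the substance of your proposal: the decomposition into (i) showing that the differential $\l G_k$-torsor structure survives every specialization and (ii) locating a single $c$ at which $\del$-simplicity holds is the correct framework, and part (i) is essentially fine (base change of the coaction and of the torsor isomorphism is formal; flatness is not even needed for that particular step). The gap lies entirely in part (ii). What you have written there is a strategy, not a proof: you assert that the ideals $I_c$ can be ``assembled'' into a $\del$-ideal over some localization $\l B'$, and that the torsor structure makes the relevant loci in $\Spec(\l B)$ constructible, but you carry out neither step. The difficulty is genuine. The $I_c$ need not vary algebraically with $c$, and the torsor isomorphism $\l Z\times_{\l Q}\l G_{\l Q}\cong \l Z\times_{\l Q}\l Z$ does not put $\del$-stable ideals of $R^c$ in correspondence with closed subschemes of $\l G_k$; it only trivializes the torsor after base change to $R^c$ itself, which says nothing about ideals of $R^c$ over $k(z)$. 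Turning your constructibility claim into an argument requires a real boundedness input---for example, a uniform bound on the complexity of possible maximal $\del$-ideals of the fibers, or a parametrization of such ideals by a finite-type scheme over $\l B$---and supplying that input is precisely the technical heart of the result in \cite{fengwib}. As it stands, your part (ii) names the target but does not reach it.
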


Note that the assumptions in Theorem \ref{thm:spe} imply that $R^{\textup{gen}}=\l R \otimes_{\l B[z]_f} K(z)$ is a Picard-Vessiot ring for $\del(y)=\l A y$ (over $K(z)$) with differential Galois group $\l G_K$. The following lemma shows that, starting from an inclusion $k\subseteq k'$ of algebraically closed fields and a differential equation $\del(y)=Ay$ over $k'(z)$, the technical conditions of Theorem \ref{thm:spe} can always be achieved.

%
%\newpar
%We introduce a final lemma from \cite{fengwib} that will let us spread out Picard-Vessiot rings, i.e. write them as extensions of scalars of differential torsors over finitely generated algebras. This allows for the use of the specialization theorem to specialize solutions of differential equations from bigger fields to their subfields.

\begin{lemma}[\cite{fengwib}, Lemma 3.11] \label{lem:spread}
	Let $k \subseteq k'$ be an inclusion of algebraically closed fields and let $A\in k'(z)^{n\times n}$. 
	%If $R/k'(z)$ is a Picard-Vessiot ring for $\partial(y) = A'y$ with fundamental solution matrix $Y' \in \GL_n(R')$,
	 Then there exist
	
	\begin{itemize}
		\item [$\bullet$] a finitely generated $k$-subalgebra $\l B$ of $k'$,
		\item[$\bullet$] a monic polynomial $f \in \l B[z]$,
		\item[$\bullet$] an affine group scheme $\l G$ of finite type over $\l B$, and
		\item[$\bullet$] a differential $\l G$-torsor $\l R/\l B[z]_f$,
	\end{itemize}
	such that
	\begin{itemize}
		\ibul $A  \in \l B[z]_f^{n\times n}$,
		\ibul $\l R$ is flat over $\l B[z]_f$ and $\l R / \l B[z]_f$ is a differential $\l G$-torsor for $\del(y) = Ay$
		\ibul $\l R \otimes_{\l B[z]_f} K(z)$ is $\del$-simple, where $K$ the algebraic closure of the field of fractions of $\l B$.
	%	\ibul $R^{\textup{gen}}\otimes_{K(z)} k'(z) \simeq R'$ by identifying $\l Y$ with $Y'$.
	\end{itemize}
	Moreover, for a fixed finitely generated $k$-algebra $\l B_0$ and monic $f_0 \in \l B_0[z]$, $\l B$ and $f$ can be chosen such that $\l B_0 \subseteq \l B$ and $f_0 $ divides $f$.
	Furtermore, if there exists an algebraic group $G$ over $k$ such that the differential Galois group of $\del(y)=Ay$ (over $k'(z)$) is of the form $G_{k'}$, then $\l G$ can be chosen to be $G_{\l B}$.
\end{lemma}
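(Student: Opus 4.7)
The plan is to use standard ``spreading-out'' techniques adapted to the differential setting: start with all the Picard-Vessiot data over $k'(z)$ and descend it to a sufficiently large finitely generated $k$-subalgebra $\l B$ of $k'$, then verify that each required property can be achieved by further enlarging $\l B$ or inverting suitable elements of it.

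First I would set up the generic picture. Let $R$ denote the Picard-Vessiot ring of $\del(y)=Ay$ over $k'(z)$ with differential Galois group $H$ over $k'$, so that $R/k'(z)$ is a differential $H$-torsor via some fundamental solution matrix $Y\in\GL_n(R)$. Cleaning denominators among the entries of $A$ produces a monic polynomial $f\in k'[z]$ with $f_0\mid f$ and $A\in k'[z]_f^{n\times n}$; adjoining to $\l B_0$ the coefficients of $f$ and of $f^N A$ for a suitable $N$ yields a finitely generated $k$-subalgebra $\l B\subseteq k'$ over which $f$ and $A$ are defined.

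Next I would spread out the Hopf algebra and the coaction. Since $k'[H]$ is a finitely generated $k'$-algebra whose Hopf structure (multiplication, comultiplication, counit, antipode) is encoded by finitely many structure constants, I can enlarge $\l B$ to contain these and obtain a Hopf $\l B$-algebra $\l B[\l G]$ with $\l B[\l G]\otimes_{\l B}k'\cong k'[H]$. In the special case where $H=G_{k'}$ for an algebraic group $G$ over $k$, I would simply take $\l G=G_{\l B}$ from the outset. Presenting $R$ as a quotient of $k'(z)[X_{i,j},\det(X)^{-1}]$ by a finitely generated ideal, a further enlargement of $\l B$ lets both this presentation and the coaction $\rho\colon R\to R\otimes_{k'}k'[H]$ descend to $\l R$ over $\l B[z]_f$ and a coaction $\l\rho\colon\l R\to\l R\otimes_{\l B}\l B[\l G]$. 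The coaction axioms and both directions of the torsor isomorphism $R\otimes_{k'(z)}R\cong R\otimes_{k'}k'[H]$ each involve only finitely many elements, so one more finite enlargement of $\l B$ (possibly combined with inverting finitely many nonzero elements) makes them hold integrally; the torsor structure \emph{for} $\del(y)=\l A y$ descends in the same way via the image $\l Y\in\GL_n(\l R)$ of $Y$.

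Finally, two more delicate conditions remain. Flatness of $\l R$ over $\l B[z]_f$ follows from generic flatness: after inverting a suitable nonzero element of $\l B$, the finitely presented $\l B[z]_f$-algebra $\l R$ becomes flat. For $\del$-simplicity of $\l R\otimes_{\l B[z]_f}K(z)$, I would fix an embedding $K\hookrightarrow k'$ extending the inclusion of the field of fractions of $\l B$ into $k'$, which exists because $k'$ is algebraically closed; then $k'(z)$ is faithfully flat over $K(z)$ (as a $K(z)$-vector space), and
\[R \;=\; \l R\otimes_{\l B[z]_f}k'(z) \;=\; \bigl(\l R\otimes_{\l B[z]_f}K(z)\bigr)\otimes_{K(z)}k'(z),\]
so any proper nonzero $\del$-ideal of $\l R\otimes_{\l B[z]_f}K(z)$ would extend to a proper nonzero $\del$-ideal of the $\del$-simple ring $R$, which is impossible. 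The main obstacle I expect is the bookkeeping: each step introduces only a finite amount of new data, but one must carry out all of the enlargements---Hopf structure, coaction axioms, torsor isomorphism, flatness, and the constraints $f_0\mid f$ and $\l B_0\subseteq\l B$---compatibly, which is handled by performing them in the right order.
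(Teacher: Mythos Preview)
The paper does not supply its own proof of this lemma: it is quoted verbatim from \cite{fengwib}, Lemma~3.11, and no argument is reproduced here. So there is no in-paper proof to compare against; your sketch is being measured against the standard spreading-out argument one would expect in the cited reference.

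On that measure your outline is essentially correct and follows the expected route: descend the Picard-Vessiot ring, the Hopf algebra of the Galois group, and the coaction to a finitely generated $k$-subalgebra $\l B\subseteq k'$; then enlarge $\l B$ and $f$ finitely many times to make the coaction axioms, the torsor isomorphism, and the compatibility with $\l B_0$, $f_0$ hold integrally; finally invoke generic flatness and deduce $\del$-simplicity over $K(z)$ from that over $k'(z)$ via an embedding $K\hookrightarrow k'$. The last step is the one genuinely nonformal point, and your argument for it (faithful flatness of $k'(z)$ over $K(z)$ pulling back a nontrivial $\del$-ideal) is the right one.

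One small imprecision worth flagging: for flatness you write ``after inverting a suitable nonzero element of $\l B$''. Generic flatness for the finitely presented $\l B[z]_f$-algebra $\l R$ a priori hands you a nonzero element of $\l B[z]_f$, not of $\l B$. You should either (i) invoke generic flatness over the base $\Spec(\l B)$ for both $\l R$ and $\l B[z]_f$ and combine (using that $\l B[z]_f$ is smooth, hence flat, over $\l B$ once the discriminant-type obstructions are inverted), or (ii) allow yourself to enlarge $f$ as well: write the element to be inverted as $h/f^m$ with $h\in\l B[z]$, adjoin the inverse of its leading coefficient to $\l B$, and replace $f$ by $f\cdot(\text{monic associate of }h)$. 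Either fix is routine, but as written the sentence slightly understates what must be localized.
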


%
%\newpar
%The idea of the theorem is that if $R^{\text{gen}}/K(z)$ is a Picard-Vessiot ring for the "generic" differential equation $\del y = \l A y$, then for most specializations $c : \l B \to k$, the specialized ring $R^c$ will be a Picard-Vessiot ring for the specialized differential equation $\del y = A^c y$, and the Galois group of $R^c/k(z)$ is the "specialized" version of the Galois group of $R^{\text{gen}}/K(z)$.
%
%
%\newpar
%Using this, \cite{fengwib} shows that any algebraic group $G$ over a subfield $k$ of $\C$ such that $G_\C$ can be realized as a Galois group of a differential equation over $\C(z)$ (that is to say, all of them) can be realized as a Galois group of a differential equation over $k(z)$. Our main point is that this process preserves the regular singular nature of the equations and the amount of generators.
%
%\newpar
%The main issue we encounter in this endeavor is that the gauge matrices realizing the regular singularity of the equation have coefficients in $\C((z))$. Since the specialization theorem is true over finitely generated algebras, we might not be able to specialize every coefficient of the matrices at once. We will show that considering only gauge transformations with coefficients in $\C(z)$ is enough to establish the regular nature of the singularities of a matrix in $\C(z)^{n\times n}$, reducing the amount of coefficients we want to specialize to a finite one.
%

%Recall that $t$ is always the local variable (at zero).??

We next study regular singular differential equations under specialization, first locally, then globally.

\begin{lemma}\label{lem: specialize regsing at 0}
	Let $k\subseteq k'$ be an inclusion of algebraically closed fields and let $A \in k'(t)^{n \times n}\subseteq k((t))^{n\times n}$ be such that $\del(y)=Ay$ is regular singular. Then there exists a finitely generated $k$\=/subalgebra $\l B$ of $k'$ and a monic polynomial $f \in \l B[t]$ such that $A \in \l B[t]_f^{n \times n}$ and $\del(y)=A^c y$ is regular singular for every $c \in \Hom_k(\l B, k)$.	
\end{lemma}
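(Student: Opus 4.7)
The plan is to use Lemma~\ref{lem:basis} to produce a \emph{rational} gauge transformation over $k'(t)$ (in place of the merely formal one that the hypothesis provides), and then to spread the resulting data out over a finitely generated $k$-subalgebra $\l B$ of $k'$ in such a way that the pole structure at $t=0$ is preserved by every $c\in\Hom_k(\l B,k)$.

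More precisely, I would first invoke Lemma~\ref{lem:basis} over $k'$ to obtain a matrix $F\in\GL_n(k'(t))$ such that $A':=FAF^{-1}+\del(F)F^{-1}$ satisfies $tA'\in k'[t]_{(t)}^{n\times n}$. Write each entry of $tA'$ as $p_{ij}/q_{ij}$ with $p_{ij},q_{ij}\in k'[t]$ and $q_{ij}(0)\neq 0$; after rescaling by $q_{ij}(0)\in k'^\times$ one may assume $q_{ij}(0)=1$. Similarly write each entry of $A$, $F$, $F^{-1}$ as a quotient of polynomials in $k'[t]$ with monic denominator. Let $\l B$ be the finitely generated $k$-subalgebra of $k'$ generated by all coefficients appearing in these numerators and denominators, and take $f\in\l B[t]$ to be the monic product of $t$, of all the $q_{ij}$'s, and of all the monic denominators of entries of $A, F, F^{-1}$. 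Then $A, F, F^{-1}, A' \in\l B[t]_f^{n\times n}$ and the identities $FF^{-1}=I$ and $A'=FAF^{-1}+\del(F)F^{-1}$ hold inside $\l B[t]_f^{n\times n}$.

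Given $c\in\Hom_k(\l B,k)$, monicity of $f$ ensures $c(f)\neq 0$, so $c$ extends to a morphism of differential $k$-algebras $c\colon\l B[t]_f\to k(t)$ with $c(t)=t$. Specializing the relation $FF^{-1}=I$ yields $F^c\in\GL_n(k(t))$ with inverse $(F^{-1})^c$, and specializing the Gauge relation gives $(A')^c=F^c A^c(F^c)^{-1}+\del(F^c)(F^c)^{-1}$; hence $A^c$ is Gauge equivalent over $k(t)$ to $(A')^c$. Finally, each entry of $t(A')^c$ equals $c(p_{ij})/c(q_{ij})$ with $c(q_{ij})(0)=c(q_{ij}(0))=c(1)=1\neq 0$, so $t(A')^c\in k[t]_{(t)}^{n\times n}\subseteq k[[t]]^{n\times n}$. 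Therefore $\del(y)=A^c y$ is regular singular.

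The main obstacle to a direct approach is that being regular singular is a priori a condition involving formal power series (a gauge transformation in $k'((t))$), and such a formal object cannot in general be encoded inside a finitely generated $k$-subalgebra of $k'$, so it does not specialize. Lemma~\ref{lem:basis} removes this obstacle by providing a rational gauge transformation; once the relevant denominators are normalized to have constant term $1\in k$, the property ``no pole at $t=0$'' is manifestly preserved by every $k$-algebra specialization, and the remainder of the argument is a routine spreading-out.
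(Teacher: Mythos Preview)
Your proof is correct and follows essentially the same strategy as the paper's: invoke Lemma~\ref{lem:basis} to replace the formal gauge transformation by a rational one, then spread out the finitely many coefficients so that the condition ``denominator of $tA'$ does not vanish at $t=0$'' is visibly preserved under every specialization. The only minor slip is that your $q_{ij}$'s have constant term $1$ rather than leading coefficient $1$, so their product is not literally monic; you should also adjoin the inverses of their leading coefficients to $\l B$ before forming $f$ (the paper handles the same issue by factoring the common denominator as $\prod_i(t-a_i)$ with $a_i\neq 0$ and adjoining the $a_i^{-1}$ to $\l B$).
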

\begin{proof}
By Lemma \ref{lem:basis}, there exists a matrix $F\in\GL_n(k'(t))$ such that the matrix
		$t(FAF^{-1}+\del(F)F^{-1})$ has entries in $k'[t]_{(t)}$. We may choose a monic polynomial $f\in k'[t]$ such that $fA$ and $fF$ have entries in $k'[t]$. Let $\l B$ be the $k$\=/subalgebra of $k'$ generated by the coefficients of $f$ and all coefficients of all entries of $fA$ and $fF$. Then $A,F\in \l B[t]_f^{n\times n}$.
		As $\det(F)\in \l B[t]_f$ is non-zero, we may write $\det(f)=\frac{h}{f^m}$ for some non-zero $h\in \l B[t]$. Adding the inverse of the leading coefficient of $h$ to $\l B$ and replacing $f$ with the product of $f$ with $h$ and the inverse of the leading coefficient of $h$, we may assume that $A, F, F^{-1}\in \l B[t]_f^{n\times n}$. 
		
		As $t(FAF^{-1}+\del(F)F^{-1})$ has entries in $k'[t]_{(t)}$, there exist non-zero elements $a_1,
		\ldots,a_m\in k'$ and a matrix $B\in k'[t]$ such that 
		\begin{equation} \label{eq: for regular}
					(t_1-a_1)\ldots(t-a_m)t(FAF^{-1}+\del(F)F^{-1})=B.
		\end{equation}		
		Replacing $\l B$ with the $k$-subalgebra of $k'$ generated by $\l B$, the coefficients of the entries of $B$ and $a_1,\ldots,a_m,a_1^{-1},\ldots,a_m^{-1}$, identity (\ref{eq: for regular}) becomes an identity in $\l B[t]_f$. Thus, for any $c \in \Hom_k(\l B, k)$, we can apply the morphism $c\colon \l B[t]_f\to k(t)$ of differential $k$-algebras to (\ref{eq: for regular}) to obtain
		$$
		(t_1-a_1^c)\ldots(t-a_m^c)t(F^cA^c(F^c)^{-1}+\del(F^c)(F^c)^{-1})=B^c,
		$$
		an identity in $k(t)$. As $B^c\in k[t]^{n\times n}$ and $a_1^c,\ldots,a_m^c$ are non-zero, this shows that $\del(y)=A^cy$ is regular singular.	
\end{proof}

%1 - if A already has singularities in k, it's fine (and should be general enough for anything we want : if we want specific singularities in k, we know that we can find a RS equation with the desired singularities over C(z))
%2 - if we want to keep the statement as is, we run into a problem : singularities could merge. Restricting ourselves to a Zariski open should solve the problem and produce a more general (kinda) lemma.

Let $A\in k(z)^{n\times n}$. Recall that a pole of an entry of $A$ need not be a singularity of $\del(y)=Ay$. Such points are sometimes called \emph{apparent singularities}. Note, however, that they are not singularities of $\del(y)=Ay$ according to our terminology. To have an unambiguous notation we make the following definition. A point $p\in\P^1(k)$ is a \emph{pole of $\del(y)=Ay$} if
\begin{itemize}
	\item $p\in k$ and $p$ is a pole of one of the entries of $A$ or
	\item $p=\infty$ and $z=0$ is a pole of one of the entries of $-\frac{1}{z^2}A(\frac{1}{z})$.
\end{itemize}	
Thus the singularities of $\del(y)=Ay$ are contained in the poles of $\del(y)=Ay$.

\begin{lemma}\label{lem:regsing}	
	Let $k\subseteq k'$ be an inclusion of algebraically closed fields and let $S$ be a finite subset of $\P^1(k)\subseteq\P^1(k')$. Furthermore, let $A \in k'(z)^{n \times n}$ be such that $\del(y)=Ay$ is regular singular with poles contained in $S$. Then there exists a finitely generated $k$-subalgebra $\l B \subseteq k'$, and a monic polynomial $f \in \l B[z]$ such that $A \in \l B[z]_f^{n \times n}$ and for every $c \in \Hom_k(\l B, k)$ the differential equation $\del (y) =A^c y$ is regular singular with poles contained in $S$.	
\end{lemma}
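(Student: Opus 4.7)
The plan is to reduce to the local specialization result, Lemma \ref{lem: specialize regsing at 0}, applied at each of the finitely many points of $S$, while handling the pole constraint separately by choosing a global denominator that already lives in $k[z]$ and is hence fixed by any $c\in\Hom_k(\l B,k)$.

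The first step is to control the pole locations. Set $f_0(z)=\prod_{p\in S\cap k}(z-p)\in k[z]$ and, using that the finite poles of $A$ lie in $S\cap k$, write $A=H/f_0^N$ for some $N\geq 0$ and some $H\in k'[z]^{n\times n}$. Once the coefficients of $H$ are absorbed into $\l B$, any specialization $A^c=H^c/f_0^N$ automatically has its finite poles inside $S\cap k$, because $f_0\in k[z]$ is fixed by $c$. The condition ``$\infty$ is not a pole of $A$'' (needed when $\infty\notin S$) unpacks to a degree bound of the form $\deg H_{ij}\leq N|S\cap k|-2$; since coefficient-wise specialization can only weakly decrease degrees, this bound is inherited by $H^c$, so $\infty$ remains non-polar for $A^c$ too.

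The second step is to secure the regular singular property. For each $p\in S\cap k$, applying Lemma \ref{lem: specialize regsing at 0} to $A\in k'(t)^{n\times n}$ with $t=z-p$ produces a finitely generated $k$-subalgebra $\l B_p\subseteq k'$ that forces the specialized equation to be regular singular at $p$; if $\infty\in S$, the same lemma applied to $-\frac{1}{t^2}A(1/t)$ with $t=1/z$ yields an analogous $\l B_\infty$. I would then take $\l B$ to be the $k$-subalgebra of $k'$ generated by the coefficients of $H$ together with all the $\l B_p$, and set $f=f_0$. The result is a finitely generated $k$-algebra and a monic polynomial with $A\in\l B[z]_f^{n\times n}$.

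The step that requires (mild) checking is the compatibility between these two ingredients. The concern is that the auxiliary polynomials $f_p$ returned by the local lemma could conceivably produce spurious denominators in $A^c$ outside of $S$; this is neutralized because the global expression $A^c=H^c/f_0^N$ with $f_0\in k[z]$ dictates all finite poles of $A^c$. In the other direction, because each finite $p$ lies in $k\subseteq\l B$, the translation $t=z-p$ commutes with the coefficient-wise action of $c$, and similarly for $t=1/z$ at infinity; hence the local equation of $A^c$ at $p$ is exactly the specialization of the local equation of $A$ at $p$, and Lemma \ref{lem: specialize regsing at 0} guarantees regular singularity there. Putting the two halves together, $\del(y)=A^c y$ has poles contained in $S$ and is regular singular at every point of $S$, hence is regular singular with poles contained in $S$, as required.
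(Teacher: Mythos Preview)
Your argument is correct and follows essentially the same route as the paper: control the pole set globally via a denominator $f_0\in k[z]$ (with the degree bound handling $\infty$), then invoke Lemma~\ref{lem: specialize regsing at 0} at each point of $S$ and amalgamate the resulting finitely generated $k$-subalgebras. The only cosmetic difference is that you take $f=f_0$, whereas the paper (harmlessly but unnecessarily) also multiplies in the auxiliary local polynomials returned by Lemma~\ref{lem: specialize regsing at 0}.
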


\begin{proof}
%	There is no difficulty in finding a finitely generated $k$-subalgebra $\l B_0$ of $k'$ and a monic $f_0\in\l B[z]$ such that $A \in \l B_0[z]_f^{n \times n}$ (cf. the proof of Lemma \ref{lem: specialize regsing at 0}).
	As the poles of $\del(y)=Ay$ are contained in $S$, there exist (not necessarily distinct) $a_1,\ldots,a_m\in S$ such that $B=(z-a_1)\ldots(z_1-a_m)A$ has entries in $k'[z]$. Let $\l B_0$ be the $k$\=/subalgebra of $k'$ generated by the coefficients of all entries of $B$ and set $f_0=(z-a_1)\ldots(z_1-a_m)$. 
	Then $A\in \l B_0[z]_{f_0}^{n\times n}$ and for any $c \in \Hom_k(\l B_0, k)$ the poles in $k$ of the differential equation $\del (y) =A^c y$ are contained in $\{a_1,\ldots,a_m\}\subseteq S$.
	
	For a non-zero rational function $h\in \l B_0[z]_{f_0}\subseteq k'(z)$, when written as $h=\frac{h_1}{f_0^e}$, with $h_1\in\l B_0[z]$, the rational function $-\frac{1}{z^2}h(\frac{1}{z})\in k'(z)$ has a pole at $z=0$ if and only if $\deg(h_1)-em+2>0$. Thus, if $-\frac{1}{z^2}h(\frac{1}{z})$ does not have a pole at $z=0$, for any specialization $c\in \Hom_k(\l B_0, k)$, the rational function $-\frac{1}{z^2}h^c(\frac{1}{z})$ does not have a pole at $z=0$. So if $p=\infty$ is not a pole of $\del(y)=Ay$, then $p=\infty$ is also not a pole of $\del(y)=A^cy$ for any $c\in\Hom_k(\l B_0,k)$. In summary, for any $c\in\Hom_k(\l B_0,k)$, the poles of $\del(y)=A^cy$ are contained in $S$.
	
%	
%	For $f_0=(z-a_1)\ldots(z_1-a_m)$ we then have $A\in \l B_0[x]_{f_0}^{n\times n}$.
%	
%	
%	The singularities of $\del(y)=Ay$ are contained in $\{a_1,\ldots,a_m\}\cup\{\infty\}$. Without loss of generality we may assume that $a_1,\ldots,a_r$ are regular points for $\del(y)=Ay$ and that $a_{r+1},\ldots,a_{m}$ are singular points for $\del(y)=Ay$.
%	
	 For $i=1,\ldots,m$, the differential equation $\del(y)=A_iy$, where $A_i=A(t+a_i)\in k'(t)^{n\times n}$ is regular singular. Thus, by Lemma \ref{lem: specialize regsing at 0}, there exists a finitely generated $k$-subalgebra $\l B_i$ of $k'$ and a monic polynomial $f_i\in \l B_i[t]$ such that $A_i\in \l B_i[t]_{f_i}^{n\times n}$ and $\del(y)=A_i^cy$ is regular singular for all $c\in\Hom_k(\l B_i,k)$. Enlarging $\l B_i$ if necessary, we may assume that $\l B_0\subseteq \l B_i$. Then $A_i^c=A(t+a_i)^c=A^c(t+a_i)$ for every $c\in\Hom_k(\l B_i,k)$. As $\del(y)=A_i^cy$ is regular singular, this shows that $p=a_i$ is a regular singular point for $\del (y)=A^cy$.
	 
	For the point $p=\infty$ we proceed similarly. % If $p=\infty$ is a regular (or regular singular) point for $\del(y)=Ay$
	As $\del(y)=-\frac{1}{t^2}A(\frac{1}{t})y$ is regular singular, there exists, by Lemma~\ref{lem: specialize regsing at 0}, a finitely generated $k$-subalgebra $\l B_\infty$ of $k'$ and a monic polynomial $f_\infty\in\l B_\infty[t]$ such that $-\frac{1}{t^2}A(\frac{1}{t})\in \l B_\infty[t]_{f_\infty}^{n\times n}$ and $\del(y)=-\frac{1}{t^2}A(\frac{1}{t})^cy$ is regular singular for every $c\in\Hom_k(\l B_\infty,k)$. We may assume that $\l B_0\subseteq \l B_\infty$. Then $-\frac{1}{t^2}A(\frac{1}{t})^c=-\frac{1}{t^2}A^c(\frac{1}{t})$ and we see that $p=\infty$ is a regular singular point for $\del (y)=A^cy$ for all $c\in\Hom_k(\l B_\infty,k)$.
	
	Let $\l B$ be the $k$-subalgebra of $k'$ generated by $\l B_1,\ldots,\l B_m,\l B_\infty$ and set $f=f_0f_1\ldots f_mf_\infty\in\l B[z]$. We claim that $\l B$ and $f$ have the desired properties.

	Clearly $A\in\l B[z]_f^{n\times n}$. Let $c\in\Hom_k(\l B,k)$. As $\l B_0\subseteq \l B$ (so that $c$ restricts to a morphism $\l B_0\to k$), it follows from the first two paragraphs, that the poles of $\del(y)=Ay$ are contained in $S$. Similarly, as $c$ restricts to morphisms on $\l B_1,\ldots,\l B_m, \l B_\infty$, it follows from the above that $a_1,\ldots,a_m,\infty$ are regular singular points for $\del(y)=A^cy$.
	As the poles of $\del(y)=A^cy$ are contained in $\{a_1,\ldots,a_m,\infty\}$, and all these points are regular singular, it follows that $\del(y)=A^cy$ is regular singular.	
\end{proof}

%\noindent
%We now only need a few lemmas to prove our final result.
%
%

\subsection{Proof of the main result}

We are now prepared to prove our main theorem.

\begin{theorem}\label{thm:inv}
	Let $G$ be a closed subgroup of $\GL_{n,k}$ that can be generated by $d$ elements and let $S$ be a subset of $\P^1(k)$ with $d+1$ elements. Then there exists a matrix $A\in k(z)^{n\times n}$ such that the differential equation $\del(y)=Ay$ is regular singular with singularities contained in $S$ and has differential Galois group $G$.
\end{theorem}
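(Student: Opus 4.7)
The plan is to deduce the theorem from the already known special case $k=\mathbb{C}$ via the specialization machinery packaged in Theorem \ref{thm:spe} together with Lemmas \ref{lem:spread} and \ref{lem:regsing}.

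First I would descend the data to a countable algebraically closed subfield of $k$. Fix generators $g_1,\ldots,g_d\in G(k)$ and let $k_1$ be the algebraic closure, inside $k$, of the subfield generated by the entries of the $g_i$, the coefficients of a defining ideal of $G\leq\GL_{n,k}$, and the elements of $S\cap k$. Then $k_1$ is countable algebraically closed of characteristic zero, $G$ descends to a closed subgroup $G_1\leq\GL_{n,k_1}$ with $S\subseteq\P^1(k_1)$, and by Lemma \ref{lem:groupgen} the $g_i$ still generate $G_1$. Choose an embedding $k_1\hookrightarrow\mathbb{C}$ and base change to obtain $G_\mathbb{C}\leq\GL_{n,\mathbb{C}}$, still generated by $d$ elements by Lemma \ref{lem:groupgen}. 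Applying the theorem in the case $k=\mathbb{C}$ (weak Riemann--Hilbert together with Schlesinger's density theorem) now yields a Fuchsian matrix $A'\in\mathbb{C}(z)^{n\times n}$ whose poles are contained in $S\subseteq\P^1(\mathbb{C})$ and such that $\del(y)=A'y$ is regular singular with differential Galois group $G_\mathbb{C}$.

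Next I would transport $A'$ to a common overfield and specialize down to $k$. Choose an algebraically closed field $k'$ of characteristic zero containing both $k$ and $\mathbb{C}$ with $k\cap\mathbb{C}=k_1$; concretely, one can take the algebraic closure of a compositum over $k_1$. Viewing $A'\in k'(z)^{n\times n}$ and applying Lemma \ref{lem:basechange}, the equation $\del(y)=A'y$ over $k'(z)$ is regular singular with poles in $S\subseteq\P^1(k')$ and Galois group $(G_\mathbb{C})_{k'}=G_{1,k'}=G_{k'}$ inside $\GL_{n,k'}$. Apply Lemma \ref{lem:regsing} to the inclusion $k\subseteq k'$ and the matrix $A'$ to obtain a finitely generated $k$-subalgebra $\l B_0\subseteq k'$ and a monic $f_0\in\l B_0[z]$ preserving the regular singular property and the location of the poles under every $c\in\Hom_k(\l B_0,k)$. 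Then feed $(\l B_0,f_0)$ into Lemma \ref{lem:spread}, invoking its ``moreover'' and ``furthermore'' clauses --- the latter is applicable because the Galois group over $k'(z)$ has the form $G_{k'}$ --- to produce $\l B\supseteq\l B_0$, $f_0\mid f$, and a differential $G_{\l B}$-torsor $\l R/\l B[z]_f$ for $\del(y)=A'y$ satisfying the hypotheses of Theorem \ref{thm:spe}. The specialization theorem then delivers a $c\in\Hom_k(\l B,k)$ such that $A:=(A')^c\in k(z)^{n\times n}$ has Galois group $(G_{\l B})_k=G$, and the equation is automatically regular singular with singularities contained in $S$ by our choice of $(\l B_0,f_0)$.

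The main obstacle is preserving the group-theoretic identification under the chain of base changes and specializations. Making the Galois group come out equal to $G$ as a subgroup of $\GL_{n,k}$ (rather than merely abstractly isomorphic to it) requires the ``furthermore'' clause of Lemma \ref{lem:spread}, which demands that the Galois group over $k'(z)$ already be the base change of an algebraic group defined over $k$. The initial descent to the common countable subfield $k_1$ is exactly what makes this identification legal: it lets me realize $G_\mathbb{C}$ over $\mathbb{C}$ and subsequently reinterpret the construction over $k'$ as $G_{k'}$. The regular singular bookkeeping is the cleaner side --- one applies Lemma \ref{lem:regsing} before Lemma \ref{lem:spread}, and the Fuchsian form of the Tretkoff--Tretkoff realization ensures that ``poles in $S$'' refines to ``singularities in $S$'' as required by the theorem statement.
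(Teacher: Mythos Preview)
Your proposal is correct and follows essentially the paper's approach: descend to a countable algebraically closed $k_1$, embed into $\mathbb{C}$, invoke the complex case, and return via the specialization package (Lemma~\ref{lem:regsing}, Lemma~\ref{lem:spread}, Theorem~\ref{thm:spe}). The only difference is the order of the last two moves: the paper specializes from $\mathbb{C}$ down to $k_1$ (applying Theorem~\ref{thm:spe} over the base $k_1$) and then base-changes up to $k$ via Lemma~\ref{lem:basechange}, whereas you first pass to a common algebraically closed overfield $k'\supseteq k,\mathbb{C}$ and then specialize directly from $k'$ to $k$; both routes work and use the same ingredients. One minor quibble: the side condition $k\cap\mathbb{C}=k_1$ is neither clearly guaranteed by your compositum construction nor actually needed for the argument---all you require is that the two embeddings of $k_1$ into $k'$ (through $k$ and through $\mathbb{C}$) agree, so that $(G_{\mathbb{C}})_{k'}=(G_1)_{k'}=G_{k'}$ is unambiguous.
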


\begin{proof}
	%Fix an algebraic group $G$ over $k$, and a finite $S \subseteq \P^1(k)$.
	 We roughly follow the short solution to the inverse problem from \cite[Section 5.2]{fengwib}. The algebraic group $G$ (together with the closed embedding into $\GL_{n,k}$) descends to a finitely generated field $k_0$, i.e., there exist a subfield $k_0$ of $k$, finitely generated over $\mathbb{Q}$, and a closed subgroup $G_0$ of $\GL_{n,k_0}$ such that $(G_0)_k=G$ (as closed subgroups of $\GL_{n,k}$).
	 
	 Let $g_1,\ldots,g_d\in G(k)$ generate $G$. Enlarging $k_0$ if necessary, we can assume that $g_1,\ldots,g_d\in G_0(k_0)\subseteq G_0(k)=G(k)$ and $S\subseteq \P^1(k_0)$.

	 %It follows from Lemma ?? that $g_1,\ldots,g_d\in G_0(k_0)$ generate $G_0$. 
	 
	 Let $k_1\subseteq k$ be the algebraic closure of $k_0$ and set $G_1=(G_0)_{k_1}$. Then $G_1$ is a closed subgroup of $\GL_{n,k_1}$. %As  that can be generated by $d$ elements (??). 
	 As $k_1$ is countable, there exists an embedding $k_1\hookrightarrow \C$ (that we fix).
	 
	 Because $g_1,\ldots,g_d\in G_0(k_0)\subseteq G_0(k_1)=G_1(k_1)$ generate $G=(G_1)_k$, it follows from Lemma~\ref{lem:groupgen} that $g_1,\ldots,g_d$ generate $G_1$. The same lemma, but applied in the reverse direction, shows that $(G_1)_{\C}$ is generated by $g_1,\ldots,g_d$. In particular, $(G_1)_{\C}$ is a closed subgroup of $\GL_{n,\C}$ that can be generated by $d$ elements.
	 
	 As noted in the introduction, Theorem \ref{thm:inv} holds for $k=\C$ as a consequence of the (weak) solution of the Riemann-Hilbert problem (\cite[Theorem 5.15]{SVDP}) and Schlesinger's density theorem (\cite[Theorem 5.8]{SVDP}). 
	 In fact, the (weak) solution of the Riemann-Hilbert problem yields a slightly stronger statement than Theorem \ref{thm:inv} in the case $k=\mathbb{C}$: The linear differential equation can be chosen such that its poles are contained in $S$. (See e.g., \cite[Part I, Theorem 4.49]{MitschiSauzin} or \cite[Theorem 12.4]{Saul}.)	 
	 Thus, there exists a matrix $A\in \mathbb{C}(z)^{n\times n}$ such that $\del(y)=Ay$ is regular singular with poles contained in $S$ and differential Galois group $(G_1)_{\mathbb{C}}$. 
	 
%	 So applying the theorem to $(G_1)_{\C}\leq\GL_{n,\C}$ and $S\subseteq\P^1(k_0)\subseteq\P^1(\C)$ yields a matrix $\mathcal{A}\in\C(z)^{n\times n}$ such that $\del(y)=\mathcal{A}y$ is regular singular with singularities in $S$ and has differential Galois group $(G_1)_{\C}$. 
%	 
%	 algebraic group $G'$ over $k_1$ such that $G'_k = G$. By extending $k_0$, we can ensure the generators of $G(k)$ are already in $G'(k)$, and thus $G'$ is generated by $d$ elements. We may also ensure that $S \subseteq \P^1(k_0)$. Since $k_0$ is finitely generated, both it and $k_1$ can be embedded into $\C$. We fix such an embedding. Notice that because of Lemma \ref{lem:groupgen}, $G'_\C$ is also generated by $d$ elements.
%	
%	\newpar
%	There exists a regular singular matrix $A$ with singularities in $S$ (seen as a subset of $\P^1(k_1) \subseteq \P^1(\C)$) such that $\partial y=By$ has Galois group $G'_\C$, because every algebraic group over $\C$ generated by $d$ elements is the Galois group of some regular singular differential equation over $\C(z)$ with singularities in $S$ (\cite{SVDP}, Theorem 5.12).
%	

	Applying Lemma \ref{lem:regsing} to the inclusion $k_1\subseteq \C$ and the differential equation $\del(y)=Ay$ yields a finitely generated $k_1$-subalgebra $\l B_0$ of $\C$ and a monic $f_0\in \l B_0[z]$ such that $A\in \l B_0[z]_{f_0}^{n\times n}$ and $\del(y)=A^cy$ is regular singular with singularities contained in $S$ for all $c\in\Hom_{k_1}(\l B_0,k_1)$. By Lemma~\ref{lem:spread} there exists a finitely generated $k_1$-subalgebra $\l B$ of $\mathbb{C}$ containing $\l B_0$, a monic polynomial $f \in \l B[z]$ such that $f_0$ divides $f$, a differential $(G_1)_{\l B}$-torsor $\l R/\l B[z]_f$ for $\del(y)=Ay$ such that $\l R$ is flat over $\l B[z]_f$ and $\l R\otimes_{\l B[z]_f}K(z)$ is $\del$-simple, where $K$ is the algebraic closure of the field of fractions of $\l B$.
	By Theorem \ref{thm:spe}, there exists a $c \in \Hom_{k_1}(\l B, k_1)$ such that $R^c = \l R \otimes_{\l B[z]_f} k_1(z)$ is a Picard-Vessiot ring for $\del(y)=A^cy$ (over $k_1(z)$) with differential Galois group $((G_1)_{\l B})_{k_1}=G_1$.
	By construction of $\l B_0$ and $f_0$, and using that $A\in \l B_0[z]_{f_0}^{n\times n}\subseteq \l B[z]_f^{n\times n}$ we see that $\del(y)=A^cy$ is regular singular with singularities contained in $S$.
	
	Finally, by Lemma \ref{lem:basechange}, the equation $\del(y)=A^cy$, considered as a differential equation over $k(z)$, has the required properties, i.e., $\del(y)=A^cy$ (over $k(z)$) is regular singular with singularities contained in $S$ and has differential Galois group $(G_1)_k=G$.	
	% Let $\l B_0$ and $f_0$ be the $k_1$-algebra and monic polynomial given by Lemma \ref{lem:regsing} for $A$.	
	%	\vspace{40mm}
% The Picard-Vessiot ring of the equation can then be described using Lemma \ref{lem:spread} : there exists
%	
%	\begin{itemize}
%		\ibul a finitely generated $k_1$-algebra $\l B$ containing $\l B_0$,
%		\ibul a monic polynomial $f \in \l B[z]$ such that $f_0 | f$ and $A \in \l B[z]_f$,
%		\ibul a $G'_{\l B}$-torsor $\l R/\l B[z]_f$ such that $R^{\text{gen}}$ is differentially simple,
%		\ibul and a fundamental matrix $\l Y \in \GL_n(\l R)$ with $\frac{d}{dz}\l Y = A \l Y$.
%	\end{itemize}
%	By Theorem \ref{thm:spe} there exists a $c \in \Hom_{k_1}(\l B, k_1)$ such that $R^c = \l R \otimes_{\l B[z]_f} k(z)$ is a Picard-Vessiot ring for $\del(y)=A^cy$ (over $k_1(z)$). Since $\l R$ is generated by the entries of $\l Y$, $R^c$ is generated by the entries of $Y^c := \l Y \otimes 1$ (which are exactly the entries of $\l Y$ after applying $c$), which satisfies the equation $\frac{d}{dz} Y^c = B^c Y^c$, so $R^c$ is a Picard-Vessiot ring for this equation. Set $A := B^c$.\\
%	Its Galois group will be $\big(G'_{\l B}\big)_{k_1} = G'$, showing that $G'$ will be the Galois group over $k_1(z)$ of a regular singular matrix equation.
\end{proof}

%The above proof shows that the matrix $A$ in Theorem \ref{thm:inv} can be chosen such that the poles of $\del(y)=Ay$ are contained in $S$.  ??

%\noindent
%Since every algebraic group is finitely generated (\cite{SVDP}, Lemma 5.13), we have the following corollary :

	\begin{corollary}\label{cor:inv}
		Any algebraic group over $k$ is the differential Galois group of a regular singular differential equation $\del (y) = A y$ over $k(z)$.
	\end{corollary}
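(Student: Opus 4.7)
The plan is to derive Corollary \ref{cor:inv} directly from Theorem \ref{thm:inv}, with the only additional ingredient being the classical fact that every linear algebraic group over an algebraically closed field of characteristic zero is finitely generated in the sense of algebraic groups.

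First, let $G$ be an algebraic group over $k$. Since, in the terminology of this paper, an algebraic group is an affine group scheme of finite type over $k$, it is linear: there exists a closed embedding $G \hookrightarrow \GL_{n,k}$ for some $n$. Next, I would invoke the fact that $G$ admits a finite subset of $G(k)$ generating $G$ as an algebraic group (equivalently, Zariski-dense in $G$). This is standard in characteristic zero: one can reduce to the connected case via the finite component group $G/G^0$, and for $G^0$ combine the Levi decomposition $G^0 = L \ltimes R_u$ (available in characteristic zero) with the fact that unipotent groups in characteristic zero are connected by exp to their Lie algebras (hence topologically finitely generated) and that reductive groups admit finite topological generating sets. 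A convenient reference can simply be cited.

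Fix such a generating set of size $d$ and choose any subset $S \subseteq \P^1(k)$ with $d+1$ elements. Then Theorem \ref{thm:inv} applied to the closed subgroup $G \leq \GL_{n,k}$ and this $S$ yields a matrix $A \in k(z)^{n \times n}$ such that $\del(y) = Ay$ is regular singular (with singularities in $S$) and has differential Galois group $G$, which is precisely the content of the corollary.

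The only non-routine step is the finite generation statement; everything else is a direct appeal to Theorem \ref{thm:inv}. Since the specific singular set plays no role in the corollary's statement, no care is needed in optimizing the number of generators or the choice of $S$.
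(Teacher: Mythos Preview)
Your proposal is correct and follows essentially the same route as the paper: embed $G$ into some $\GL_{n,k}$, use that every algebraic group over an algebraically closed field of characteristic zero is generated by finitely many $k$-points, and then apply Theorem \ref{thm:inv}. The paper simply cites references for the embedding and the finite-generation fact rather than sketching a Levi/unipotent argument.
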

	\begin{proof}
	As any (affine) algebraic group is isomorphic to a closed subgroup of some general linear group (\cite[Cor. 4.10]{Milne:AlgebraicGroups}) and every algebraic group (in characteristic zero) is finitely generated (\cite{SVDP}, Lemma 5.13), this follows from Theorem \ref{thm:inv}.
	\end{proof}

\subsection{An interpretation in terms of proalgebraic groups}

In this final section we offer an interpretation of our main result (Theorem \ref{thm:inv}) in terms of proalgebraic groups.

Recall that the \emph{free proalgebraic group} $\Gamma_d$ on $d$ generators (over $k$) is characterized by the following universal property of the map $\iota\colon X\to \Gamma_d(k)$ from a $d$-element set $X$ into the $k$-points of $\Gamma_d$: For every proalgebraic group $G$ and every map $\varphi\colon X\to G(k)$ there exists a unique morphism $\phi\colon \Gamma_d\to G$ of proalgebraic groups such that
$$
\begin{tikzcd}  X \arrow[d, "\varphi"'] \arrow{r}{\iota} & \Gamma_d(k) \arrow[ld, dashrightarrow, "\phi"]  \\  G(k)  \end{tikzcd}
$$
commutes. The proalgebraic group $\Gamma_d$ can be constructed as the fundamental group of the neutral tannakian category of all (finite dimensional, $k$-linear) representations of the (abstract) free group $F_d$ on $d$ generators. In other words, $\Gamma_d$ is the proalgebraic completion (or proalgebraic hull) of $F_d$. (See \cite{proalg} for more background on free proalgebraic groups.)

Let $S$ be a subset of $\P^1(k)$ with $d+1$ elements and let $\Gamma_S$ be the differential Galois group of the family of all regular singular differential equations over $k(z)$ with singularities contained in $S$.
For $k=\C$, it is an immediate corollary of the Riemann-Hilbert correspondence (\cite[Theorem~6.15]{SVDP}) that $\Gamma_S$ is isomorphic to $\Gamma_d$. It therefore seems natural to expect that $\Gamma_S$ is isomorphic to $\Gamma_d$ in general. This question was already raised in \cite[Section~4.1]{Wibmer:RegSing}. While we are far from being able to show that indeed $\Gamma_S$ is isomorphic to $\Gamma_d$, Theorem \ref{thm:inv} can be seen as a small step into the right direction. It implies that every algebraic quotient of $\Gamma_d$ is also a quotient of $\Gamma_S$.

%\noindent
%We can formulate this result in terms of free proalgebraic groups (see \cite{proalg} for a reference on free proalgebraic groups in general) :

\begin{corollary}\label{cor:proalg}
	Let $G$ be an algebraic group over $k$. If $G$ is a quotient of $\Gamma_d$, then $G$ is also a quotient of $\Gamma_S$.
\end{corollary}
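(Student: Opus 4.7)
The plan is to deduce the corollary from Theorem \ref{thm:inv} after identifying the algebraic quotients of $\Gamma_d$ with those algebraic groups that are generated, in the algebraic-group sense, by $d$ elements, and then invoking the fact that the differential Galois group of a single equation belonging to a family is a quotient of the Galois group of the whole family.

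I would first establish the characterization: an algebraic group $G$ over $k$ is a quotient of $\Gamma_d$ if and only if $G$ can be generated by $d$ elements. The key intermediate fact is that every closed subgroup $H$ of $\Gamma_d$ with $\iota(X)\subseteq H(k)$ already equals $\Gamma_d$; this is seen by applying the universal property of $\Gamma_d$ to the map $X\to H(k)$, which produces a morphism $\Gamma_d\to H$ whose composition with the inclusion $H\hookrightarrow\Gamma_d$ restricts to $\iota$ on $X$ and hence must be the identity on $\Gamma_d$ by the uniqueness clause of the universal property. Given now a surjection $\phi\colon \Gamma_d\to G$, set $g_i=\phi(\iota(x_i))$. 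The preimage under $\phi$ of the closed subgroup of $G$ generated by $g_1,\ldots,g_d$ is a closed subgroup of $\Gamma_d$ containing $\iota(X)$, hence equals $\Gamma_d$, and the surjectivity of $\phi$ then forces $g_1,\ldots,g_d$ to generate $G$. Conversely, if $g_1,\ldots,g_d$ generate $G$, the universal property yields a morphism $\phi\colon\Gamma_d\to G$ with $\phi(\iota(x_i))=g_i$, whose image is a closed subgroup of $G$ containing the generators and therefore all of $G$.

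To finish, after fixing a closed embedding $G\hookrightarrow\GL_{n,k}$ (using that every algebraic group is linear, as in the proof of Corollary~\ref{cor:inv}), Theorem \ref{thm:inv} produces a matrix $A\in k(z)^{n\times n}$ such that $\del(y)=Ay$ is regular singular with singularities in $S$ and has differential Galois group $G$. Since the Picard-Vessiot ring of the family $\mathcal{F}$ of all regular singular equations over $k(z)$ with singularities in $S$ is generated by the Picard-Vessiot rings of its members (as recalled in the excerpt), the Picard-Vessiot ring of $\del(y)=Ay$ embeds into that of $\mathcal{F}$, and restriction of differential automorphisms yields a surjective morphism $\Gamma_S\to G$. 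I foresee no serious obstacle: the argument is essentially formal once Theorem \ref{thm:inv} is in hand, the only non-tautological step being the uniqueness argument used to characterize algebraic quotients of $\Gamma_d$.
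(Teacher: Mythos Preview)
Your proposal is correct and follows essentially the same route as the paper: reduce to the fact that quotients of $\Gamma_d$ are exactly the algebraic groups generated by $d$ elements, apply Theorem~\ref{thm:inv}, and then use that the Galois group of a member of a family is a quotient of the Galois group of the family. The only cosmetic difference is that the paper cites \cite[Lemma~2.16]{proalg} for the characterization of quotients of $\Gamma_d$ and \cite[Theorem~2.11]{AmanoMasuokaTakeuchi} for the surjectivity $\Gamma_S\to G$, whereas you supply a short self-contained argument for the former and sketch the mechanism for the latter.
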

\begin{proof}
If $G$ is a quotient of $\Gamma_d$, then $G$ can be generated by $d$ elements (\cite[Lemma 2.16]{proalg}). After embedding $G$ into some $\GL_{n,k}$, we can apply Theorem \ref{thm:inv} to find a regular singular differential equation $\del(y)=Ay$ with differential Galois group $G$. The Picard-Vessiot ring $R$ of $\del(y)=Ay$ is canonically embedded in the Picard-Vessiot ring $R_S$ of the family of all regular singular differential equations with singularities in $S$. By the second fundamental theorem of differential Galois theory (see e.g., \cite[Theroem 2.11]{AmanoMasuokaTakeuchi}), the differential Galois group of $R$ is a quotient of the differential Galois group of $R_S$, i.e., $G$ is a quotient of $\Gamma_S$ as desired.
\end{proof}

%\begin{corollary}[Galois group of all regular singular equations]\label{cor:proalg}
%		Let $k$ be an algebraically closed field of characteristic zero, and $S \subset \P^1(k)$ a subset of cardinality $d+1$. All quotients of the free proalgebraic group on $d$ elements, i.e. all algebraic groups generated by $\le d$ elements occur as algebraic quotients of the Galois group of all regular singular equations with singularities in $S$.
%\end{corollary}
%
%\begin{proof}
%Call $\hat{R}_S$ the Picard-Vessiot ring of all regular singular equations with singularities in $S$. The algebraic quotients of $\Gal(\hat{R}_S/k(z))$ are the Galois groups of Picard-Vessiot rings $k(z) \subseteq R \subseteq \hat{R}_S$ such that $(R \otimes_{k(z)}R)^\del$ is finitely generated as a $k$-algebra, or, equivalently, that $R$ is finitely generated as a $k(z)$-algebra, meaning $R$ is a Picard-Vessiot ring for a matrix differential equation.
%\end{proof}
%
%\noindent
%Theorem \ref{cor:proalg} is a nice step in the direction of a description of the Galois group of all regular singular differential equations with singularities in (finite) $S \subseteq \P^1(k)$ over $k(z)$ with algebraically closed $k$ of characteristic zero. The next natural question to ask is whether this group is actually isomorphic to the free proalgebraic group on $d = |S|-1$ elements : this is known (even for infinite $S$) when $k=\C$, but not for any other algebraically closed field.

\printbibliography

Thomas Serafini, Sorbonne Université, IMJ-PRG, 4 place Jussieu, 
75005 Paris, France, \texttt{tserafini@dma.ens.fr} 

\medskip

Michael Wibmer, School of Mathematics, University of Leeds, LS2 9JT, Leeds, United Kingdom, \texttt{m.wibmer@leeds.ac.uk}

\end{document}